\documentclass{journal}
\usepackage{tikz}
\usepackage{color}
\usepackage[all]{xy}
\usepackage{amsmath, amssymb}
\usepackage{amsfonts}
\usepackage{mathrsfs}
\usepackage{amsthm}
\usepackage{bm}
\usepackage{amscd}
\usepackage{tikz-cd}

\makeatletter
 \def\th@plain{\upshape}
 \makeatother

\newtheorem{theorem}{Theorem}[section]
\newtheorem{proposition}[theorem]{Proposition}
\newtheorem{lemma}[theorem]{Lemma}

\newtheorem{definition}[theorem]{Definition}
\newtheorem{rem}[theorem]{Remark}
\newtheorem{question}[theorem]{Question}

\title{An embedding of spherical quandles into Lie groups}
\author{
Ayu Suzuki
\thanks{Division of Mathematical and Physical Science, Graduate School of Science, Japan Women's University, 2-8-1 Mejirodai, Bunkyo-ku Tokyo, 112-8681, Japan, E-mail: \texttt{m2016044sa@ug.jwu.ac.jp}}
\and
Kentaro Yonemura
\thanks{Osaka Central Advanced Mathematical Institute, Osaka Metropolitan University, Sugimoto, Sumiyoshi-ku, Osaka City 558-8585, Japan, E-mail: \texttt{yonemura-kentaro@sei.co.jp}}
}
\date{}

\usepackage{lineno}
\begin{document}

\maketitle
\begin{abstract}
We construct smooth embeddings of spherical quandles into conjugation quandles of Lie groups, where the ambient Lie groups can be taken to be orthogonal, Spin, or Pin groups. Moreover, in dimensions $1$ and $3$, we compare our embeddings with those due to Bergman and Akita.
\end{abstract}

\noindent\textbf{MSC2020:} Primary 20N02; Secondary 57K12, 53C35.

\begin{keywords}
spherical quandle, embedding of quandles into conjugation quandles of groups, smooth quandle, symmetric space
\end{keywords}

\section{Introduction}
A \textit{quandle} is an algebraic structure closely related to knot theory and symmetric space theory. Joyce \cite{Joyce1982}, who coined the term quandle, noted in his introduction that symmetric spaces provide canonical examples of quandles; namely, the quandle operation is defined via the geodesic symmetry at each point. Ishikawa \cite{Ishikawa} introduced the notion of a smooth quandle, which generalizes quandles arising from symmetric spaces, and developed its basic theory. A smooth quandle is a differentiable manifold with a smooth operation satisfying the quandle axioms. Quandles are often compared to groups, and smooth quandles are analogous to Lie groups.

Embedding quandles into conjugation quandles of groups is an important problem and has been studied extensively. Joyce \cite{Joyce1982} gave an embedding of free quandles and suggested that the theory of quandles may be viewed as a generalization of conjugation in groups. Joyce introduced conjugation quandles as a fundamental class of examples; see \cite[Section 4]{Joyce1982} for details. An important early result related to this problem is due to Ryder~\cite{ryder1996algebraic}, who proved that the natural map from the fundamental quandle of a link to the fundamental group of its complement is injective if and only if the link is prime. Eisermann \cite{Eisermann2014} defined a \textit{quandle covering} and developed the basic theory. In his paper, he pointed out that the covering theory of quandles embedded in groups is essentially the theory of central group extensions. Bardakov, Dey, and Singh \cite{BardakovDeySingh2017} dealt with the following problem: 
\begin{question}[\cite{BardakovDeySingh2017}, Question 3.1]
\label{Question_embedding-problem}
For which quandles $X$ does there exist a group $G$ such that $X$ embeds in the conjugation quandle $\operatorname{Conj}(G)$?
\end{question}
Bardakov et al. \cite{BardakovDeySingh2017} and subsequent authors exhibited quandles for which the answer to Question \ref{Question_embedding-problem} is positive. See \cite{Akita2022embedding} and its references for more details. In connection with Question \ref{Question_embedding-problem}, we propose the following question.

\begin{question}
\label{conj_embedding_of_smooth_quandles}
Let $X$ be a topologically connected and algebraically connected smooth quandle. Does there exist a Lie group $G$ and a smooth embedding $\iota:X\to G$ such that $\iota$ is a quandle homomorphism into the conjugation quandle $\operatorname{Conj}G$ ?
\end{question}
Question~\ref{conj_embedding_of_smooth_quandles} asserts the existence of an embedding that is simultaneously a smooth embedding of manifolds and a quandle homomorphism. The question may be regarded as a smooth analogue of Question \ref{Question_embedding-problem}. This formulation is useful because it allows us to use not only algebraic conditions but also geometric conditions. For example, in studying Question~\ref{conj_embedding_of_smooth_quandles}, we often use the universal covering $p:\widetilde{G}\to G$ of a connected Lie group $G$. The map $p$ can be viewed in three ways: as a topological covering, as a quandle covering defined by Eisermann~\cite{Eisermann2014}, and as a central extension. This perspective will be used in the proof of Theorem \ref{main_theorem}. Even if Question~\ref{conj_embedding_of_smooth_quandles} admits counterexamples, it is still important to identify classes of quandles for which Question~\ref{conj_embedding_of_smooth_quandles} holds.

There are infinitely many examples of quandles satisfying Question~\ref{conj_embedding_of_smooth_quandles}. For instance, if we equip an arbitrary smooth manifold with the trivial quandle structure, then the Whitney embedding~\cite{whitney1936differentiable,whitney1944self} into a Euclidean space, viewed as an abelian Lie group, yields a smooth quandle embedding.

In this paper, we prove the question for spherical quandles $S^{n}_{\mathbb{R}}$, that is, the quandles on the sphere induced by its structure as a Riemannian symmetric space.

\begin{theorem}
\label{main_theorem}
For any positive integer $n$, there is a Lie group $G_{n}$ and a smooth embedding $\iota_{n}:S^{n}\to G_{n}$ such that $\iota_n$ is an injective quandle homomorphism from the spherical quandle $S^n_{\mathbb{R}}$ to the conjugation quandle $\operatorname{Conj}G_n$. Moreover, $G_n$ may be chosen as follows:
\[
G_{n}
=
\left\{
\begin{array}{ll}
O(2)&  (n=1)\\
Spin(n+1)& (n \mbox{ is even})\\
Pin^{+}(n+1)& (n\mbox{ is odd and }n\geq3)
\end{array}
\right.
.
\]
\end{theorem}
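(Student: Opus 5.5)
The plan is to start from the reflection embedding into $O(n+1)$ and then lift it to $Spin$ or $Pin^{+}$ in the cases where the reflection map fails to be injective.

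\textbf{Reflection map.} The spherical quandle operation is
\[
x\triangleright y = 2\langle x,y\rangle x - y,
\]
the geodesic symmetry of $S^n$ at $x$ applied to $y$. The natural first candidate is $r:S^n\to O(n+1)$, $r(x)=I-2xx^{T}$, the reflection in the hyperplane $x^{\perp}$. One checks that $r_x r_y r_x^{-1}=r_{r_x(y)}$ and $r_x(y)=y-2\langle x,y\rangle x=-(x\triangleright y)$. Hence $r(x\triangleright y)=r(-r_x(y))=r_{r_x(y)}$, using $r_{-z}=r_z$, and this equals $r_x r_y r_x^{-1}$. So $r$ is a quandle homomorphism into $\operatorname{Conj}O(n+1)$, but it is two-to-one, with $r_x=r_{-x}$.

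The quandle structure uses $-r_x$, the symmetry fixing $x$. So instead I would take
\[
s(x)=-r_x=2xx^{T}-I,
\]
which satisfies $s(x)y=x\triangleright y$. Then $s(x)s(y)s(x)^{-1}=r_xr_yr_x^{-1}=r_{s(x)y}$, hence $-r_{s(x)y}=s(x\triangleright y)$. Since $s(x)$ determines $x$ up to sign, $s$ is still two-to-one.

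\textbf{Case $n=1$.} I would use a different model: send $x=(\cos\theta,\sin\theta)$ to the reflection of $\mathbb R^2$ across the line at angle $\theta/2$. Reflections in $O(2)$ form a circle $\{\rho_\phi\}$, $\phi\in\mathbb R/2\pi$, with $\rho_\alpha\rho_\beta\rho_\alpha=\rho_{2\alpha-\beta}$. In $S^1_{\mathbb R}$ one has $x_\alpha\triangleright x_\beta=x_{2\alpha-\beta}$. So $x_\theta\mapsto\rho_\theta$, the reflection matrix $\begin{pmatrix}\cos\theta&\sin\theta\\ \sin\theta&-\cos\theta\end{pmatrix}$, is a smooth embedding and a quandle homomorphism.

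\textbf{Case $n\ge 2$: lifting.} I would lift to the Clifford group. In the Clifford algebra with $v^2=+1$ for unit $v$ (the $Pin^+$ convention), a unit vector $x$ gives an element $x\in Pin^+(n+1)$. Twisted conjugation $x\,y\,\alpha(x)^{-1}$ acts on vectors as $\pm r_x$, with the sign depending on convention. The ordinary conjugation needed here is $xyx^{-1}=xyx$ (since $x^2=1$), and a direct Clifford computation gives
\[
xyx = 2\langle x,y\rangle x - y = x\triangleright y.
\]
Therefore the inclusion $S^n\hookrightarrow Pin^+(n+1)$, $x\mapsto x$, is an injective smooth embedding and a quandle homomorphism into $\operatorname{Conj}Pin^{+}(n+1)$ for every $n$. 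This already handles odd $n\ge 3$.

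For even $n$ the theorem asks for $Spin(n+1)$, so I need an element of even degree. Here $n+1$ is odd, so the volume element $\omega=e_1\cdots e_{n+1}$ is central in the full Clifford algebra. I would set $\iota(x)=x\omega$, or $\omega x$, possibly times a scalar $\pm 1$ or a power of $i$ so that it lies in $Spin(n+1)$ rather than its complex version. This element is even. Because $\omega$ is central, conjugation by $x\omega$ equals conjugation by $x$, so
\[
\iota(x)\iota(y)\iota(x)^{-1}=(xyx)\,\omega=\iota(x\triangleright y).
\]
The map is injective and a smooth embedding, being the inclusion multiplied by a fixed element.

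\textbf{Main obstacle.} The hardest step is confirming that $x\omega$ actually lies in $Spin(n+1)$. This needs the norm $x\omega\,\overline{x\omega}$ to equal $1$ with the correct sign, which depends on $\omega^2=\pm1$ and on whether one uses the $+$ or $-$ Clifford convention when $n+1\equiv 1,3 \pmod 4$. If the sign is wrong, I would switch the Clifford convention, which is allowed for $Spin$ since $Spin^{\pm}$ coincide, or use $\pm x\omega$. It also has to be checked that the image lies in the identity component and is compact, so that the injective immersion is a genuine embedding.
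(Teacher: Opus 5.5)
Your proof is correct, and for $n\ge 2$ it takes a genuinely different and much more elementary route than the paper. For $n=1$ you do essentially what the paper does: its $\iota_1$ is your map precomposed with $\theta\mapsto-\theta$.

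The paper never writes a Clifford-algebra formula. Its argument runs as follows:
\begin{itemize}
\item it notes that $\operatorname{inn}(\bm{e}_1g)=g^{-1}h_ng$ (your $s$) descends to an isomorphism $P^n_{\mathbb R}\cong\operatorname{Conj}(h_n)$;
\item using Borel's connectedness theorem and a covering result of Helgason, it proves that $\operatorname{Conj}(\tilde h_n)\to\operatorname{Conj}(h_n)$ is the universal (double) covering;
\item via Lie--Palais, it lifts the $SO(n+1)$-actions on $\mathbb{R}P^n$ and on $\operatorname{Conj}(h_n)$ to $Spin(n+1)$-actions;
\item it derives the homomorphism property from $Spin(n+1)$-equivariance of the induced diffeomorphism $S^n\cong\operatorname{Conj}(\tilde h_n)$.
\end{itemize}

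You replace all of this by the single identity $y^{-1}xy=2\langle x,y\rangle y-x$ for unit vectors. This identity holds whether $y^2=+1$ or $y^2=-1$, so it does not matter which double cover of $O(n+1)$ is called $Pin^{+}$.

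For even $n$, your map is exactly the paper's $\iota_n$ with $\tilde h_n=e_1\omega=\pm e_2\cdots e_{n+1}$, since centrality of $\omega$ gives $\tilde g^{-1}(e_1\omega)\tilde g=(\tilde g^{-1}e_1\tilde g)\,\omega$. For odd $n$, you land in the conjugacy class of lifts of reflections $r_x$ rather than of $h_n=-r_{e_1}$. So your embedding differs from the paper's, which in your language is $x\mapsto x\omega$ with $\omega$ now commuting only with even elements.

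Your route buys explicit formulas and a two-line verification. The paper's route buys a mechanism that does not rely on a Clifford model: realizing the quandle as the universal cover of a conjugacy class in $\operatorname{Inn}$, i.e.\ inside a central extension. The authors use this as a template for other homogeneous quandles.

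A few small points:
\begin{itemize}
\item The ``main obstacle'' you flag is not one. $x\omega=x\,e_1e_2\cdots e_{n+1}$ is a product of $n+2$ unit vectors, hence lies in $Pin(n+1)$, and it is even, hence lies in $Spin(n+1)$. No norm check or rescaling is needed, and a factor of $i$ would not be available in the real Clifford algebra anyway.
\item In your motivational computation, $s(x)s(y)s(x)^{-1}=-r_xr_yr_x^{-1}$ (three sign changes), not $r_xr_yr_x^{-1}$. The intended conclusion $s(x)s(y)s(x)^{-1}=s(x\triangleright y)$ is still true, and the step is not used later.
\item You use the opposite conventions to the paper's, which are $\bm{x}\triangleright\bm{y}=2\langle\bm{x},\bm{y}\rangle\bm{y}-\bm{x}$ and $g\triangleright h=h^{-1}gh$. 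This is harmless: every element in the image of each of your maps squares to $\pm1$, so conjugating by it is the same as conjugating by its inverse.
\end{itemize}
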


Our motivation for proving Theorem \ref{main_theorem} is to construct an example of a non-faithful quandle that embeds into a conjugation quandle. For any quandle, there is a natural quandle homomorphism from it to its inner automorphism group. A quandle is \textit{faithful} if the natural quandle homomorphism is injective.  Quandles arising from Riemannian symmetric spaces are Joyce's model cases of quandles. In this case, the faithfulness of the quandles is related to the Cartan embedding. See also \cite[Example B.3]{NSK}. One often asks whether a given quandle is faithful when we approach Question \ref{Question_embedding-problem}. If a given quandle is not faithful, we need to find a group whose conjugation quandle contains the given quandle. The quandle treated in Theorem \ref{main_theorem} is not faithful. In general, few methods to find an appropriate group are known, and this is why Question \ref{Question_embedding-problem} is difficult. Spherical quandles, which we treat in this paper, are not faithful. However, we consider the universal covering group of the inner automorphism group of spherical quandles and obtain a group into which the quandle embeds. The manifold structure of spherical quandles is a key ingredient in the proof.

We compare the low-dimensional cases of our results with Akita's embedding \cite{Akita2022embedding} of twisted conjugation quandles and with Bergman's embedding \cite{Bergman2021core} of core quandles. Although the work of Akita and Bergman provides embeddings in a fairly general setting, it is rooted in an algebraic perspective and is primarily motivated by applications to finite or, more generally, discrete quandles. In contrast, the present paper focuses on quandle structures arising from symmetric spaces, in particular from spheres, and aims to clarify how these fit into and relate to the embedding theories of Akita and Bergman. Moreover, the objects appearing in Theorem \ref{main_theorem} involve abstract groups such as the Pin and Spin groups, as well as the associated quandles; by exploiting low-dimensional exceptional isomorphisms, we describe them in terms of more familiar Lie groups such as \(SO(2)\) and \(SU(2)\), thereby placing our discussion in a more classical algebraic framework. Our goal is not to construct and compare embeddings on a case-by-case basis, but rather to extract from our results a unified framework for understanding these phenomena in future work. Building on this viewpoint, the first author \cite{AyuSuzuki} observed that spherical quandles and core quandles are homogeneous quandles. She provided a sufficient condition for the existence of an embedding from a homogeneous quandle into a conjugation quandle. In particular, both the embedding and the target quandle can be described explicitly. Using this, she extended Theorem~\ref{main_theorem} to, for example, the case of oriented real Grassmann manifolds.

The present paper incorporates material from the master's thesis \cite{AyuSuzuki2026masterthesis} and the doctoral dissertation \cite{Yonemura2023embedding}.

This paper is organized as follows. In Section~\ref{section_preliminaries}, we recall basic notation and facts on quandles. In Section~\ref{chapter_Action_of_Lie_groups_on_smooth_manifolds}, using covering maps introduced in \cite{LiftingAction,MontaldiOrtega2009}, we construct liftings of smooth group actions. In Section~\ref{Chapter_Embedding_spherical_quandles_in_Lie groups}, we construct embeddings of spherical quandles and thereby prove Theorem~\ref{main_theorem}. In Section~\ref{section_preliminaries_Bergman_Akita_embedding}, as preparation for Theorem~\ref{theorem_ASuzuki_S1} and Theorem~\ref{theorem_ASuzuki_S3}, we review Bergman's embedding and Akita's embedding and develop the necessary preliminaries. In Section~\ref{section_SUZUKI_result_S1}, we focus on the abelian Lie group \(SO(2)\) and state Theorem~\ref{theorem_ASuzuki_S1}. In Section~\ref{section_SUZUKI_result_S3}, we focus on the Lie group \(SU(2)\) and state Theorem~\ref{theorem_ASuzuki_S3}.

\begin{rem}
While we were preparing this paper, a work claiming the existence of a counterexample to Question~\ref{conj_embedding_of_smooth_quandles} appeared; see Arai~\cite[Remark 2.6]{Arai2025detecting}, Kai~\cite[\S 3]{Kai2025quandle}, and Arai--Kai~\cite[\S 4.2]{AraiKai2026detecting}.
\end{rem}

\section{Quandle basics}
\label{section_preliminaries}
In this section, we introduce quandles and the algebraic connectivity of quandles. See \cite{Kamadabook,NSK} for more details.

\begin{definition}[\cite{Joyce1982,Matveev1982}]
\label{def_quandle}
A \textit{quandle} is a set $X$ with a binary operation $\triangleright:X\times X\to X$ satisfying the three conditions:

\noindent(Q1) $x\triangleright x = x$ for any $x\in X$.

\noindent(Q2) The map $S_y:X\to X$ defined by $x\mapsto x\triangleright y$ is bijective for any $y\in X$.

\noindent(Q3) $(x\triangleright y)\triangleright z = (x\triangleright z)\triangleright(y\triangleright z)$ for any $x,y,z\in X$.
\end{definition}
We denote $S^{-1}_{y}(x)$ as $x\triangleright^{-1}y$ for $x,y\in X$. The conditions Q1, Q2, and Q3 in Definition \ref{def_quandle} are consistent with Reidemeister moves, operations for knot diagrams in knot theory, I, II and III respectively. See Figure \ref{pic_def_quandle_operation}.

\begin{figure}[hbtp]
\centering
\includegraphics[width=13cm]{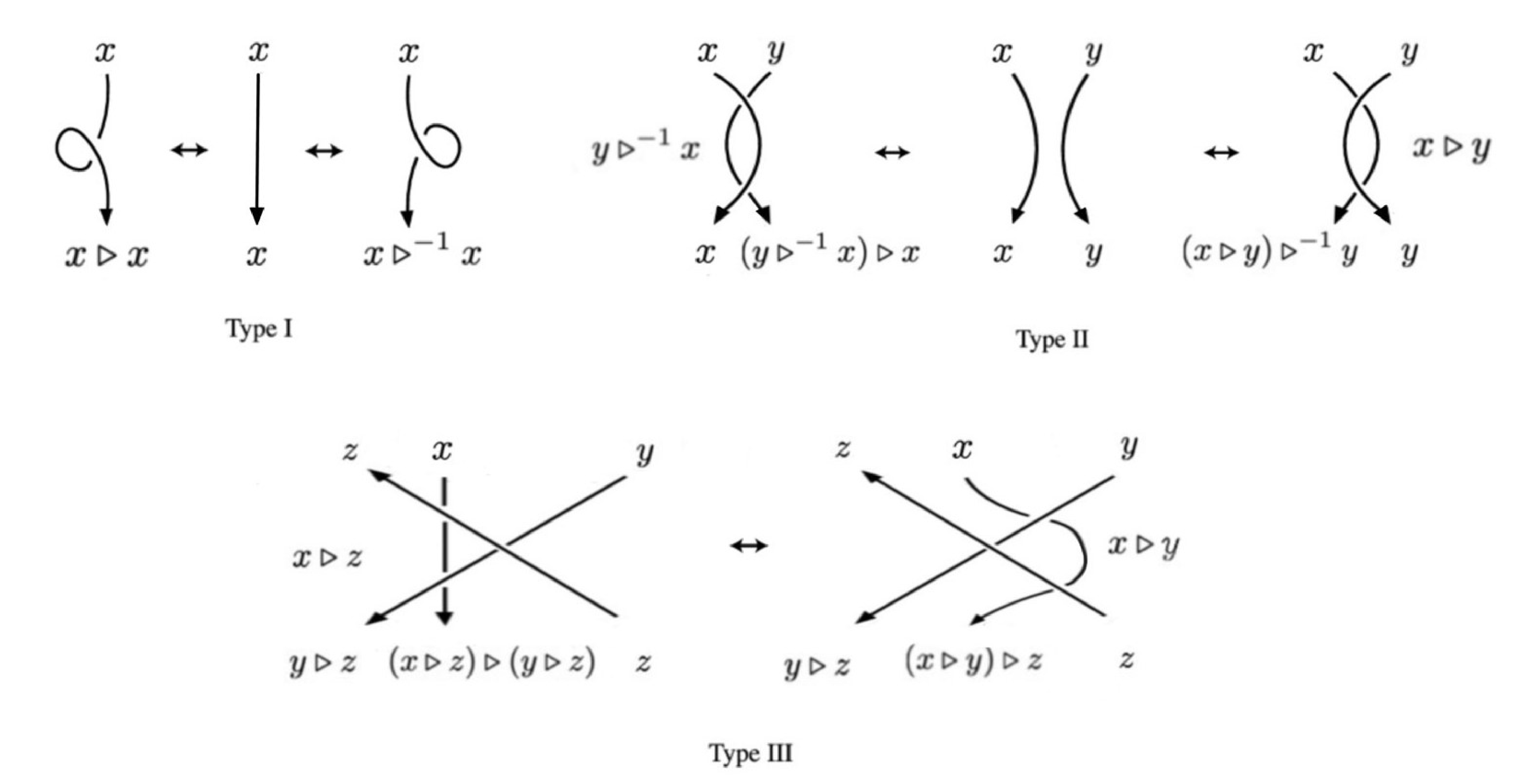}
\caption{Geometric interpretation of the axioms in Definition \ref{def_quandle}.}
\label{pic_def_quandle_operation}
\end{figure}

We define the algebraic connectivity of quandles. Suppose that $X$ and $Y$ are quandles. A map $f:X\to Y$ is called a homomorphism if $f(x\triangleright y)=f(x)\triangleright f(y)$ for any $x,y \in X$. Isomorphisms and automorphisms are defined similarly. We denote the automorphism group of a quandle $X$ as $\operatorname{Aut}X$. By the axiom Q3, the bijection $S_{y}$ in Q2 is an automorphism. Then we call the subgroup of $\operatorname{Aut}X$ generated by $\{S_{y}\}_{y\in X}$ the inner automorphism group and denote it by $\operatorname{Inn}X$. The inner automorphism group $\operatorname{Inn}X$ acts on $X$ on the right naturally. We say that $X$ is algebraically connected if the action of $\operatorname{Inn}X$ on $X$ is transitive.

Let $G$ be a group, and $\triangleright:G\times G\to G$ be a binary operation defined by $g\triangleright h=h^{-1}gh$. Then the algebraic system $(X,\triangleright)$ is a quandle called the conjugacy quandle. We denote the quandle as $\operatorname{Conj}G$. 

We introduce the spherical quandle defined by \cite{AzcanFenn1994}. Let $\langle-,-\rangle:\mathbb{R}^{n+1}\times\mathbb{R}^{n+1}\to\mathbb{R}$ be the Euclidean inner product, and let $S^{n}$ be the $n$-sphere,
\[
\left\{
\bm{x}=(x_1,x_2,\dots,x_{n+1})\in \mathbb{R}^{n+1}
\mid
\langle \bm{x},\bm{x}\rangle
=x_1^2+x_2^2+\cdots+x_{n+1}^2=1
\right\}.
\]
We define the binary operation $\triangleright:S^{n}\times S^{n}\to  S^{n}$ as $
\bm{x}\triangleright \bm{y}=2{\langle \bm{x},\bm{y}\rangle}\bm{y}-\bm{x}
$ 
for all $\bm{x},\bm{y}\in S^{n}$. Then $(S^{n},\triangleright)$ is a quandle and is called the spherical quandle $S^{n}_{\mathbb{R}}$.

The inner automorphism $S_{\bm{y}}$ can be interpreted as a linear transformation which identically acts on $\bm{y}$ and $-\operatorname{Id}$ on the subspace orthogonal to $\bm{y}$. Applying the Cartan–Dieudonn\'{e} theorem, we get the following fact.

\begin{proposition}[\cite{Nosaka2017}]
Suppose $n$ is a positive integer. If $n$ is odd, then $\operatorname{Inn}S^{n}_{\mathbb{R}}$ is isomorphic to the orthogonal group $O(n+1)$. If $n$ is even, then $\operatorname{Inn}S^{n}_{\mathbb{R}}$ is isomorphic to the special 
orthogonal group $SO(n+1)$.
\end{proposition}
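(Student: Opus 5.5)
The plan is to identify each inner automorphism $S_{\bm y}$ with an explicit orthogonal matrix, so that $\operatorname{Inn}S^n_{\mathbb{R}}$ becomes a concrete subgroup of $O(n+1)$, and then to apply the Cartan–Dieudonn\'e theorem in each parity.

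\textbf{Setting up the identification.} First note that $S_{\bm y}(\bm x)=2\langle \bm x,\bm y\rangle \bm y-\bm x$ is the restriction to $S^n$ of the linear map
\[
A_{\bm y}=2\bm y\bm y^{T}-I_{n+1}.
\]
Write
\[
R_{\bm y}=I_{n+1}-2\bm y\bm y^{T}
\]
for the reflection in the hyperplane $\bm y^{\perp}$. Then $A_{\bm y}=-R_{\bm y}$, which is orthogonal, and
\[
\det A_{\bm y}=(-1)^{n+1}\cdot(-1)=(-1)^n .
\]
A linear map of $\mathbb{R}^{n+1}$ is determined by its values on $S^n$, since $S^n$ spans $\mathbb{R}^{n+1}$. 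Hence restriction to the sphere gives an injective group homomorphism from the subgroup $H\subset O(n+1)$ generated by $\{A_{\bm y}\}_{\bm y\in S^n}$ onto $\operatorname{Inn}S^n_{\mathbb{R}}$. It therefore suffices to compute $H$.

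\textbf{The common subgroup $SO(n+1)\subset H$.} For any $\bm y,\bm z\in S^n$ we have
\[
A_{\bm y}A_{\bm z}=R_{\bm y}R_{\bm z},
\]
so $H$ contains every product of two hyperplane reflections. By the Cartan–Dieudonn\'e theorem, every element of $SO(n+1)$ is a product of an even number of reflections. Grouping these reflections in consecutive pairs shows that $SO(n+1)\subset H$, for every $n$.

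\textbf{Case $n$ even.} Here $\det A_{\bm y}=1$ for every generator, so $H\subset SO(n+1)$. Combined with the previous step, $H=SO(n+1)$.

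\textbf{Case $n$ odd.} Here $\det A_{\bm y}=-1$, so $H$ contains $SO(n+1)$ together with an element of determinant $-1$. Since $SO(n+1)$ has index $2$ in $O(n+1)$, this forces $H=O(n+1)$.

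\textbf{Main obstacle.} The only nontrivial input is the Cartan–Dieudonn\'e theorem, which we invoke as stated. The step that needs a little care is the faithfulness of the passage from matrices to maps of the sphere, which is exactly the spanning observation above. Beyond that, the sign bookkeeping relating $A_{\bm y}$ and $R_{\bm y}$ must be kept straight in each parity.
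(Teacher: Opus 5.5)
Your proposal is correct and follows the same route as the paper: the paper observes that $S_{\bm y}$ is the linear map fixing $\bm y$ and acting by $-\operatorname{Id}$ on $\bm y^{\perp}$ (your $-R_{\bm y}$), and then invokes the Cartan--Dieudonn\'e theorem. You have supplied the details the paper leaves implicit, namely $\det A_{\bm y}=(-1)^n$, the faithfulness of restriction to $S^n$, and pairing reflections to get $SO(n+1)\subset H$.
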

The action of the inner automorphism group on the spherical quandle $S^{n}_{\mathbb{R}}$ coincides with the natural action of the orthogonal group $O(n+1)$ or the special orthogonal group $SO(n+1)$ on $S^{n}$ and is transitive. Hence we get the following fact.
\begin{proposition}
The spherical quandle is algebraically connected.
\end{proposition}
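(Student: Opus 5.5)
The statement to prove is the last proposition: the spherical quandle $S^n_{\mathbb{R}}$ is algebraically connected. The plan is to show directly that $\operatorname{Inn}S^n_{\mathbb{R}}$ acts transitively on $S^n$. I will build the needed inner automorphisms from the symmetries $S_{\bm{y}}$ themselves, so the argument does not depend on the identification of $\operatorname{Inn}S^n_{\mathbb{R}}$ with $O(n+1)$ or $SO(n+1)$.

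First, note that $S_{\bm{y}}$ is the restriction to $S^n$ of the linear map $\bm{v}\mapsto 2\langle\bm{v},\bm{y}\rangle\bm{y}-\bm{v}$. This map fixes $\bm{y}$ and acts as $-\operatorname{Id}$ on $\bm{y}^{\perp}$, so it lies in $O(n+1)$. It has determinant $(-1)^n$.

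Next, fix $\bm{x},\bm{x}'\in S^n$. I want some $\bm{y}$ with $S_{\bm{y}}(\bm{x})=\bm{x}'$, which means $\bm{x}'=2\langle\bm{x},\bm{y}\rangle\bm{y}-\bm{x}$.

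\textbf{Case $\bm{x}'\neq-\bm{x}$.} Put $\bm{y}=(\bm{x}+\bm{x}')/\|\bm{x}+\bm{x}'\|$. Since $\bm{x}-\bm{x}'$ is orthogonal to $\bm{x}+\bm{x}'$, we can write $\bm{x}$ as the midpoint $\tfrac12(\bm{x}+\bm{x}')$ plus $\tfrac12(\bm{x}-\bm{x}')$. The map $S_{\bm{y}}$ fixes the first component and negates the second, so $S_{\bm{y}}(\bm{x})=\bm{x}'$.

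\textbf{Case $\bm{x}'=-\bm{x}$.} Choose any $\bm{y}\perp\bm{x}$ with $\|\bm{y}\|=1$; this exists because $n\geq1$. Then $S_{\bm{y}}(\bm{x})=-\bm{x}$, as required. Alternatively, pass through an intermediate point $\bm{z}\neq\pm\bm{x}$ and compose two symmetries from the first case.

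In either case $\bm{x}$ and $\bm{x}'$ lie in one $\operatorname{Inn}$-orbit, so the action is transitive. I expect no real obstacle; the only care needed is the antipodal case and the fact that the midpoint construction is well defined. One should also double-check that the symmetry convention (the map fixes $\bm{y}$ and negates $\bm{y}^{\perp}$) matches the formula $\bm{x}\triangleright\bm{y}$ defining the quandle, which it does.
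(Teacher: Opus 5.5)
Your argument is correct, and it takes a genuinely different route from the paper.

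The paper constructs no symmetries explicitly. It deduces algebraic connectivity from the preceding proposition (due to Nosaka), which uses the Cartan--Dieudonn\'{e} theorem to identify $\operatorname{Inn}S^{n}_{\mathbb{R}}$ with $O(n+1)$ or $SO(n+1)$. It then observes that the natural action of this group on $S^{n}$ is transitive.

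You bypass the identification of the inner automorphism group entirely. You show that for any $\bm{x},\bm{x}'\in S^{n}$ there is a single $\bm{y}$ with $\bm{x}\triangleright\bm{y}=\bm{x}'$: the normalized midpoint when $\bm{x}'\neq-\bm{x}$, and any unit vector orthogonal to $\bm{x}$ otherwise. Both computations check out, since $2\langle\bm{x},\bm{x}+\bm{x}'\rangle=\|\bm{x}+\bm{x}'\|^{2}$.

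Your approach is self-contained and gives a stronger conclusion. The generators $S_{\bm{y}}$ alone already act transitively, with no products needed. This is the quandle version of the fact that any two points of the round sphere are exchanged by a geodesic symmetry at a midpoint.

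The paper's approach instead gives structural information: it determines $\operatorname{Inn}S^{n}_{\mathbb{R}}$ completely. That identification is the conceptual starting point for the later passage to the covering groups $Spin(n+1)$ and $Pin^{+}(n+1)$. For transitivity alone, the containment $SO(n+1)\subseteq\operatorname{Inn}S^{n}_{\mathbb{R}}$ would already suffice.
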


\section{Action of Lie groups on smooth manifolds}
\label{chapter_Action_of_Lie_groups_on_smooth_manifolds}

\subsection{Smooth actions of Lie groups and the Lie--Palais theorem}
In this section, we discuss smooth actions of Lie groups and the Lie--Palais theorem. See \cite{GorbatsevichOnishchikVinberg1997,Wang2013} for more details.

Let $G$ be a finite-dimensional Lie group, $M$ a smooth manifold and $\mathfrak{X}(M)$ the Lie algebra of the smooth vector fields on $M$. Suppose that $G$ acts smoothly on $M$ on the right. The action induces a group antihomomorphism $\tau:G\to\operatorname{Diff}M$, where $\operatorname{Diff}M$ is the group of diffeomorphisms on $M$. Then $\tau$ induces an infinitesimal action, that is, a Lie algebra antihomomorphism $d\tau:\mathfrak{g}\to \mathfrak{X}(M)$: for each $X\in\frak{g}$, we define a vector field $X_{M}\in\mathfrak{X}(M)$ as
\[
(X_M(x))(\xi)=
\left.\frac{d}{dt}\right\vert_{t=0}
\xi(x\cdot\operatorname{exp}t X),
\]
where $x\in M$ and $\xi$ is a smooth function defined in a neighborhood of the point $x$. The generating vector field \(X_{M}\) associated with \(X\) is complete, since its flow is given by the action of the corresponding one-parameter subgroup.

Conversely, in certain cases, an infinitesimal action can be integrated to a Lie group action. One such case is the Lie–Palais theorem.
\begin{theorem}[Lie--Palais theorem for right actions; cf.~\cite{Palais1957}]
\label{theorem_Lie-Palais}
Let $G$ be a connected and simply connected Lie group, $\phi:\mathfrak{g}\to\mathfrak{X}(M)$ be a Lie algebra antihomomorphism such that each $X_{M}=\phi(X)$ is complete. Then there exists a unique smooth right action $\tau:G\to\operatorname{Diff}M$ such that $d\tau=\phi$.
\end{theorem}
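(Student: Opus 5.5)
The plan is to prove the Lie--Palais theorem for right actions by reducing it to the classical left-action version (Palais), or, more directly, by building the action through the graph method. The reduction goes like this. A right action $\tau$ of $G$ with infinitesimal antihomomorphism $\phi$ corresponds to the left action $g\cdot x := x\cdot g^{-1}$. Its infinitesimal generator is $X\mapsto -\phi(X)$, and this map is a Lie algebra homomorphism because $\phi$ is an antihomomorphism and $X\mapsto -X$ is an anti-automorphism of $\mathfrak{g}$. Completeness of $-\phi(X)$ is equivalent to completeness of $\phi(X)$. So it suffices to integrate a homomorphism $\psi:\mathfrak{g}\to\mathfrak{X}(M)$ whose image consists of complete fields to a left action, and then set $x\cdot g := g^{-1}\cdot x$.

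For the left version I would follow Palais's graph argument. Consider the product $G\times M$ and the distribution $\mathcal{D}$ spanned by the vector fields $(X^{R}, \psi(X))$ for $X\in\mathfrak{g}$, where $X^{R}$ is the right-invariant field on $G$ chosen so that the sign conventions make $X\mapsto (X^R,\psi(X))$ a homomorphism. The distribution $\mathcal{D}$ is involutive because this map is a Lie algebra homomorphism, and it has constant rank $\dim G$ because it projects isomorphically onto $TG$. By Frobenius, $G\times M$ is foliated by leaves of $\mathcal{D}$, and the projection $\pi$ of each leaf to $G$ is a local diffeomorphism.

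The key step is to show that, for the leaf $L_x$ through $(e,x)$, the map $\pi|_{L_x}:L_x\to G$ is a covering map. Completeness is used exactly here. Every element of $G$ is a product of exponentials. Along a curve $t\mapsto \exp(tX)g$ in $G$, a lift in $L_x$ is given by following the flow of the complete field $\psi(X)$ in the $M$-coordinate, and this lift exists for all $t$. This gives path lifting along such piecewise one-parameter curves. From it I would derive the standard criterion: $\pi|_{L_x}$ is a local diffeomorphism in which every such path lifts, so it is a covering map. Since $G$ is simply connected and $L_x$ is connected, $\pi|_{L_x}$ is a diffeomorphism.

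Once this is established, define $g\cdot x$ as the $M$-component of $(\pi|_{L_x})^{-1}(g)$. The following properties then come from the construction and from uniqueness of integral manifolds:
\begin{itemize}
\item[] smoothness in $(g,x)$ follows from smooth dependence of leaves, using the local product structure of the foliation;
\item[] the action law holds because left translation of leaves by $h$ maps leaves to leaves, the fields $X^{R}$ being right-invariant;
\item[] the identity $d\tau=\phi$ holds by differentiating along $\exp(tX)$;
\item[] uniqueness holds because two actions with the same generators agree on one-parameter subgroups, which generate the connected group $G$.
\end{itemize}

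The main obstacle is the covering argument, that is, proving that lifts exist along arbitrary paths in $G$ and not only along products of one-parameter curves. I would handle it by working in a neighbourhood $U=\exp(V)$ of $e$ with uniform lifts. Flows of the finitely many fields $\psi(X_i)$ for a basis give, via canonical coordinates of the second kind, a local section over $U$ through any point of a leaf. Translating these sections covers all of $G$ evenly, and this yields the covering property.
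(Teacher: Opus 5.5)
Your outline is correct and follows the same route as the paper, which gives no proof of its own and simply cites Palais: the graph construction on $G\times M$ is the standard proof of this integration theorem, with completeness supplying uniform local sections so that each leaf covers the simply connected $G$ and is therefore the graph of an orbit map. One slip to fix: a distribution spanned by right-invariant fields in the $G$-factor is preserved by the right translations $R_h\times\mathrm{id}_M$, not the left ones, so the action law comes from $(R_h\times\mathrm{id}_M)(L_{h\cdot x})=L_x$ (both leaves pass through $(h,h\cdot x)$), which says exactly $(gh)\cdot x=g\cdot(h\cdot x)$.
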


\subsection{Lifting Lie group actions}
\label{section_Lifting_Lie group_actions}
Suppose that a connected Lie group $G$ acts smoothly on a connected manifold $M$, and suppose that $N$ is a covering space of $M$. Then we can construct a smooth action of the universal covering group of $G$ on $N$. We study the concrete construction of the action in this subsection. The construction is introduced in \cite{LiftingAction,MontaldiOrtega2009}. However, we do not describe the original construction, but rather a reformulation of the method sketched in \cite[Remark 2.2]{LiftingAction}.

Let $p:\tilde{G}\to G$ and $\pi_{N}:N\to M$ be covering maps of a connected Lie group $G$ and a connected manifold $M$ respectively. We remark that $p$ induces a Lie algebra isomorphism $p_{*}:\tilde{\mathfrak{g}}\to\mathfrak{g}$, where $\tilde{\mathfrak{g}}$ and $\mathfrak{g}$ are the Lie algebras of $\tilde{G}$ and $G$ respectively.

Suppose $G$ acts on $M$ on the right. Then the action induces a unique group antihomomorphism $\tau:{G}\to\operatorname{Diff}M$. The group antihomomorphism also induces a Lie algebra antihomomorphism $d\tau:\mathfrak{g}\to\mathfrak{X}(M)$. For any $X\in\mathfrak{g}$, an element $\tilde{X}\in\tilde{\mathfrak{g}}$ and a vector field $\tilde{X}_{N}\in \mathfrak{X}(N)$ satisfying
\[
\left\{
\begin{array}{l}
p_{*}(\tilde{X})=X\\
d\pi_{N}\tilde{X}_{N}=X_{M}\circ\pi_{N}
\end{array}
\right.
\]
are uniquely determined. Here, we used Proposition \ref{prop_covering_map_vectorfield}.
\begin{proposition}
\label{prop_covering_map_vectorfield}
Let $\tilde{M}$ and $M$ be smooth manifolds without boundary, $p:\tilde{M}\to M$ be a smooth covering map and $X$ be a smooth vector field on $M$. Then there exists a uniquely defined smooth vector field $\tilde{X}$ on $\tilde{M}$ such that
\[
dp(x)\tilde{X}(x)=X\circ p(x).
\]
\end{proposition}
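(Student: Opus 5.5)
The plan is to define $\tilde{X}$ pointwise by inverting $dp$, using that $p$ is a local diffeomorphism, and then to check smoothness locally. Uniqueness is immediate from this description.

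First, since $p:\tilde{M}\to M$ is a smooth covering map, every $x\in\tilde{M}$ has an open neighborhood $U$ such that $p|_{U}:U\to p(U)$ is a diffeomorphism onto an open subset of $M$. In particular $dp(x):T_{x}\tilde{M}\to T_{p(x)}M$ is a linear isomorphism for every $x$. The condition $dp(x)\tilde{X}(x)=X(p(x))$ therefore forces
\[
\tilde{X}(x)=\bigl(dp(x)\bigr)^{-1}X(p(x)),
\]
which gives uniqueness. We take this formula as the definition of $\tilde{X}$. The result is a well-defined (a priori rough) section of $T\tilde{M}$ satisfying the required identity.

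For smoothness, we work on such an evenly covered neighborhood $U$. With $\sigma=(p|_{U})^{-1}:p(U)\to U$ the smooth local inverse, on $U$ we have
\[
\tilde{X}|_{U}=d\sigma\circ X\circ p|_{U}.
\]
This is the pushforward of the smooth vector field $X|_{p(U)}$ under the diffeomorphism $\sigma$, hence smooth. Indeed, for $y=p(x)$ the chain rule gives $d\sigma(y)=(dp(x))^{-1}$, so this local expression agrees with the pointwise definition. Since these neighborhoods cover $\tilde{M}$ and smoothness is local, $\tilde{X}$ is a smooth vector field.

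There is no real obstacle here. The only point requiring care is that $\tilde{X}$ is defined globally without any choice, and so needs no patching. This holds because the formula depends only on the point $x$ and not on the chosen local section $\sigma$. The hypothesis that the manifolds have no boundary ensures that $p$ is a local diffeomorphism between open manifolds of the same dimension, so the inverse function argument applies.
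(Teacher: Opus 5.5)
Your proof is correct. The paper states this proposition without proof, as a standard fact, and your argument is precisely the standard proof it takes for granted: you define $\tilde{X}(x)=(dp(x))^{-1}X(p(x))$, which gives uniqueness at once, and you get smoothness from $\tilde{X}|_{U}=d\sigma\circ X\circ p|_{U}$ for a local inverse $\sigma$. The only quibble is your closing remark: that $p$ is a local diffeomorphism is part of the definition of a smooth covering map (each sheet over an evenly covered set maps diffeomorphically onto it), so neither the inverse function theorem nor the no-boundary hypothesis is actually used.
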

Then we obtain a Lie algebra antihomomorphism
\[
\tilde{\phi}:
\tilde{\frak{g}}\to\frak{X}(N),
\quad
\tilde{X}\mapsto\tilde{X}_{N}.
\]
Since the flow of the complete vector field \(X_M\) lifts along the covering map \(\pi_N:N\to M\) for all time, the lifted vector field \(\tilde{X}_N\) on \(N\) is also complete. By the Lie--Palais theorem (see Theorem \ref{theorem_Lie-Palais}), $\tilde{\phi}$ induces a group antihomomorphism $\tilde{\tau}:\tilde{G}\to\operatorname{Diff}N$ satisfying $d\tilde{\tau}=\tilde{\phi}$. Hence we are able to construct an action of $\tilde{G}$ on the cover $N$ compatible with an action of $G$ on $M$.

\subsection{The relationship between the lifted Lie group actions and coverings}
Suppose a connected Lie group $G$ acts on a connected manifold $M$ smoothly, and suppose $N$ is a covering of $M$. As we saw in Section \ref{section_Lifting_Lie group_actions}, we can construct a smooth action of the universal covering group $\tilde{G}$ of $G$ on the given cover $N$. In this subsection, we see the relationship between the induced action and the covering.
\begin{proposition}[\cite{LiftingAction,MontaldiOrtega2009}]
\label{prop_comm_action_N}
Let $p:\tilde{G}\to G$ be a universal covering map of a connected Lie group $G$. Then the following diagram commutes:
\[
\begin{tikzcd}
  N\times \tilde{G} \ar[r, "\operatorname{action}"] \arrow[d, "\pi_{N}\times p"'] & N \ar[d, "\pi_{N}"] \\
  M\times G \ar[r,"\operatorname{action}"] & M
\end{tikzcd}.
\]
\end{proposition}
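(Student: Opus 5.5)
We must show $\pi_N(y\cdot\tilde g)=\pi_N(y)\cdot p(\tilde g)$ for all $y\in N$ and $\tilde g\in\tilde G$. The strategy is to check this first on one-parameter subgroups, using the defining relations of the lifted vector fields. We then extend to all of $\tilde G$ by connectedness and the antihomomorphism property.

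Fix $\tilde X\in\tilde{\mathfrak g}$ and put $X=p_*(\tilde X)$. Fix $y\in N$ and consider the two curves
\[
\alpha(t)=\pi_N\bigl(y\cdot\exp t\tilde X\bigr),\qquad \beta(t)=\pi_N(y)\cdot\exp tX
\]
in $M$. Since $d\tilde\tau=\tilde\phi$, the curve $t\mapsto y\cdot\exp t\tilde X$ is the integral curve of $\tilde X_N$ through $y$. Applying $d\pi_N$ and the relation $d\pi_N\tilde X_N=X_M\circ\pi_N$, we see that $\alpha$ is an integral curve of $X_M$ with $\alpha(0)=\pi_N(y)$. The curve $\beta$ is also an integral curve of $X_M$ with the same initial point, by the definition of $X_M$. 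Uniqueness of integral curves then gives $\alpha=\beta$. Since $p$ is a Lie group homomorphism with differential $p_*$, we have $p(\exp t\tilde X)=\exp tX$. Hence the diagram commutes on $N\times\exp(\tilde{\mathfrak g})$.

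Next, $\tilde G$ is connected, so it is generated by $\exp(\tilde{\mathfrak g})$. Every $\tilde g$ can therefore be written as $\tilde g=\exp\tilde X_1\cdots\exp\tilde X_k$. For a right action, $y\cdot(\tilde g_1\tilde g_2)=(y\cdot\tilde g_1)\cdot\tilde g_2$, and $p$ is multiplicative. Induction on $k$ then gives
\[
\pi_N(y\cdot\tilde g)=\pi_N\bigl((y\cdot\tilde g_1)\cdot\tilde g_2\bigr)=(\pi_N(y\cdot\tilde g_1))\cdot p(\tilde g_2)=\pi_N(y)\cdot p(\tilde g_1)p(\tilde g_2),
\]
where $\tilde g_1=\exp\tilde X_1\cdots\exp\tilde X_{k-1}$ and $\tilde g_2=\exp\tilde X_k$. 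This last expression equals $\pi_N(y)\cdot p(\tilde g)$, as required.

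The main point needing care is the first step. We must make sure the action $\tilde\tau$ obtained from the Lie--Palais theorem really has the curves $y\cdot\exp t\tilde X$ as flows of $\tilde X_N$, with sign and order conventions matching the right action and the antihomomorphism $d\tau$. This follows from $d\tilde\tau=\tilde\phi$ together with the definition of generating vector fields. If a sign discrepancy appeared, it would occur identically on $M$ and on $N$, so the argument would go through unchanged.
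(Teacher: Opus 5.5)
Your proof is correct. The paper does not prove this proposition itself; it quotes it from the cited references. So there is no in-paper argument to match, and what you give is a self-contained verification fitted to the paper's own construction, in which the $\tilde G$-action on $N$ is obtained from Lie--Palais applied to the lifted fields $\tilde X_N$.

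Your two steps are both sound:
\begin{enumerate}
\item You integrate $d\pi_N\tilde X_N=X_M\circ\pi_N$ along one-parameter subgroups, using uniqueness of integral curves and $p(\exp t\tilde X)=\exp tX$.
\item You extend to all of $\tilde G$, using that $\exp(\tilde{\mathfrak g})$ generates the connected group $\tilde G$ and that both sides of the identity are compatible with products.
\end{enumerate}
Together they form the global counterpart of the infinitesimal identities the paper checks in Propositions~\ref{LiftActionRPtoS} and~\ref{prop_lifting_conj-conj}. Your closing worry about conventions is unnecessary. With the paper's definition of $X_M$ through $x\cdot\exp tX$, the right-action law $x\cdot\exp((s+t)X)=(x\cdot\exp sX)\cdot\exp tX$ gives the flow property directly, and the same holds on $N$.

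For comparison, a more common topological route defines the lift by lifting the map $(y,\tilde g)\mapsto\pi_N(y)\cdot p(\tilde g)$ from $N\times\tilde G$ through $\pi_N$. This is possible because $\tilde G$ is simply connected, so every loop class in $N\times\tilde G$ comes from $N\times\{e\}$, where the map is just $\pi_N$. On that route the square commutes by construction, and the work shifts to checking the action axioms by uniqueness of lifts. In the Lie--Palais construction the action axioms and smoothness come for free, and commutativity of the square is exactly what remains to be shown. Your argument supplies precisely that.
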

Suppose $\pi:\tilde{M}\to M$ is a universal covering of $M$. The covering map $\pi$ induces a covering morphism $q_{N}$ that makes the following diagram commutative:
\begin{equation*}
\xymatrix{
\tilde{M}\ar[r]^-{q_{N}}\ar[d]_-{\pi}&N\ar[dl]^-{\pi_{N}}\ar@{}@<-2.0ex>[dl]|{}\\
M
}.
\end{equation*}
Applying Proposition \ref{prop_comm_action_N}, we get the following proposition.
\begin{proposition}
\label{prop_action_vs_covering_morphism}
The induced covering morphism $q_{N}$ is $\tilde{G}$-equivariant, that is, for any  $\tilde{x}\in\tilde{M}$ and $\tilde{g}\in\tilde{G}$,
the covering morphism $q_{N}$ satisfies $q_{N}(\tilde{x}\cdot \tilde{g})=q_{N}(\tilde{x})\cdot\tilde{g}$.
\end{proposition}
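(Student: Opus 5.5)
The plan is to compare two maps $\tilde{M}\times\tilde{G}\to N$, namely $(\tilde{x},\tilde{g})\mapsto q_{N}(\tilde{x}\cdot\tilde{g})$ and $(\tilde{x},\tilde{g})\mapsto q_{N}(\tilde{x})\cdot\tilde{g}$. Both lift the same map to $M$, and a uniqueness-of-lifts argument will force them to coincide. Here the action of $\tilde{G}$ on $\tilde{M}$ and the action on $N$ are the lifted actions from Section \ref{section_Lifting_Lie group_actions}, obtained by applying the construction to the coverings $\pi$ and $\pi_{N}$.

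\textbf{Step 1: both maps lift the same map.} Composing each map with $\pi_{N}$ and using $\pi_{N}\circ q_{N}=\pi$ together with Proposition \ref{prop_comm_action_N} (once for $N$, once for $\tilde{M}$), we get
\[
\pi_{N}\bigl(q_{N}(\tilde{x}\cdot\tilde{g})\bigr)=\pi(\tilde{x}\cdot\tilde{g})=\pi(\tilde{x})\cdot p(\tilde{g}),
\qquad
\pi_{N}\bigl(q_{N}(\tilde{x})\cdot\tilde{g}\bigr)=\pi_{N}(q_{N}(\tilde{x}))\cdot p(\tilde{g})=\pi(\tilde{x})\cdot p(\tilde{g}).
\]
So both are continuous lifts through $\pi_{N}$ of the map $\tilde{M}\times\tilde{G}\to M$, $(\tilde{x},\tilde{g})\mapsto\pi(\tilde{x})\cdot p(\tilde{g})$.

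\textbf{Step 2: uniqueness of lifts.} The space $\tilde{M}\times\tilde{G}$ is connected, since both factors are connected. By uniqueness of lifts, the two maps agree everywhere once they agree at one point. At $\tilde{g}=e$ the lifted action of $\tilde{G}$ is trivial on both $\tilde{M}$ and $N$, because $\tilde{\tau}$ is a group antihomomorphism and so $\tilde{\tau}(e)=\operatorname{Id}$. Hence both maps equal $q_{N}(\tilde{x})$ at $(\tilde{x},e)$, and the conclusion follows.

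\textbf{Main obstacle.} The delicate point is whether $q_{N}$ is genuinely compatible with the chosen lifted actions, rather than with some other lifts. The argument above avoids this issue: it uses only Proposition \ref{prop_comm_action_N} and the identity element, and never needs the explicit Lie--Palais construction.

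As a cross-check, there is an infinitesimal route. Since $\pi_{N}\circ q_{N}=\pi$, the push-forward $dq_{N}(\tilde{X}_{\tilde{M}})$ also lifts $X_{M}$. By the uniqueness in Proposition \ref{prop_covering_map_vectorfield}, it therefore equals $\tilde{X}_{N}\circ q_{N}$. So $q_{N}$ intertwines the flows, and hence the actions of one-parameter subgroups. Since $\tilde{G}$ is connected and is generated by these subgroups, this gives the result again.
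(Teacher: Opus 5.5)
Your proposal is correct and takes essentially the paper's approach. The paper only says the statement follows by applying Proposition \ref{prop_comm_action_N}, and your argument (both maps lift $(\tilde{x},\tilde{g})\mapsto\pi(\tilde{x})\cdot p(\tilde{g})$ through $\pi_{N}$, the domain $\tilde{M}\times\tilde{G}$ is connected, and the maps agree at $\tilde{g}=e$) supplies the uniqueness-of-lifts detail that the paper leaves implicit. One small correction to your infinitesimal cross-check: $dq_{N}\circ\tilde{X}_{\tilde{M}}$ is only a vector field along $q_{N}$, not a vector field on $N$, so compare it with $\tilde{X}_{N}\circ q_{N}$ pointwise using the injectivity of $d\pi_{N}$, rather than by appealing to the uniqueness in Proposition \ref{prop_covering_map_vectorfield}.
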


\section{Embedding spherical quandles in Lie groups}
\label{Chapter_Embedding_spherical_quandles_in_Lie groups}
In this section, we prove Theorem \ref{main_theorem}. One can easily see that spherical quandles can be embedded in the Pin groups in the case $n\geq 2$ since the Spin group is the identity component of the Pin group. Theorem \ref{main_theorem} is already mentioned by Eisermann \cite[Remark 3.12]{Eisermann2014} without proof.

\subsection{Preliminaries}
Let $M(n,\mathbb{R})$ denote the set of all $n$-dimensional real square matrices. We consider two Lie groups: the orthogonal group 
$
O(n)=
\{
g\in M(n,\mathbb{R})
\ :\ {}
{}^{t}gg=I
\}
$ {}, where $I$ is the identity matrix, and the special orthogonal group {}
$
SO(n)=
\{
g\in O(n)
\ :\ {}
\operatorname{det}g=1
\}.
$ {}
Let $h_{n}$ be a diagonal matrix
\[
h_{n}=
\begin{pmatrix}
1 & & &\\
 & -1 & &\\
 & & \ddots &\\
 & & & -1
\end{pmatrix}
\in O(n+1).
\]
If $n$ is even, the diagonal matrix $h_{n}$ is also an element of $SO(n+1)$. The special orthogonal group $SO(n)$ is the identity component of the orthogonal group $O(n)$. The orthogonal group $O(n)$ has two connected components
and has a presentation
\begin{equation}
\label{equation_coset_decomposition_O(n)}
O(n)=
\begin{cases}
SO(n)\sqcup h_{n-1}SO(n), & \text{if } n \text{ is even},\\
SO(n)\sqcup (-I)SO(n), & \text{if } n \text{ is odd},
\end{cases}
\end{equation}
Since $SO(n)$ is a connected Lie group, $SO(n)$ has a unique universal covering group $Spin(n)$ and a universal covering map $p:Spin(n)\to SO(n)$. The map $p$ is a double covering and is also induced by the adjoint representation of $Spin(n)$.

There are two standard double covers of \(O(n)\), usually denoted by \(Pin^{+}(n)\) and \(Pin^{-}(n)\).
In each case the covering map \(p\colon Pin^{\pm}(n)\to O(n)\) fits into a short exact sequence
\[
1\to \mathbb{Z}/2\mathbb{Z}\to Pin^{\pm}(n)\xrightarrow{p} O(n)\to 1.
\]
For background on \(Pin^{\pm}(n)\), see \cite[\S 3]{AtiyahBottShapiro1964}. In this paper, we regard $Pin^{+}(n)$ as a double cover of $O(n)$ in order to use the isomorphism \(Pin^{+}(4)\cong Spin(4)\ltimes\mathbb{Z}/2\mathbb{Z}\) in Section~\ref{section_SUZUKI_result_S3}. We fix a lift \(\tilde{h}_n\in p^{-1}(\{h_n\})\).

Let $\sim_{n}$ be an equivalence relation on the sphere \(S^{n}\) defined by
\[
\bm{x}\sim_{n}\bm{y}
\Longleftrightarrow
\bm{y}=\pm\bm{x}
\quad
(\bm{x},\bm{y}\in S^{n}).
\]
Then $\mathbb{R}P^{n}:=S^{n}/\sim_{n}$ is a real projective space and the natural projection $\pi:S^{n}\to\mathbb{R}P^{n}$ is a covering map. Since the relation $\sim_{n}$ is a congruence relation, that is, if $\bm{x}_{1},\bm{x}_{2},\bm{y}_{1},\bm{y}_{2}\in S^{n}$ satisfy $\bm{x}_{1}\sim_{n}\bm{x}_{2}$ and $\bm{y}_{1}\sim_{n}\bm{y}_{2}$, then $\bm{x}_{1}\triangleright\bm{y}_{1}\sim_{n}\bm{x}_{2}\triangleright\bm{y}_{2}$. Hence the quandle structure of $S^{n}_{\mathbb{R}}$ induces a quandle structure on $\mathbb{R}P^{n}$. We denote the quandle over $\mathbb{R}P^{n}$ as $P^{n}_{\mathbb{R}}$. The natural action $S^{n}\curvearrowleft O(n+1)$ induces the action $\mathbb{R}P^{n}\curvearrowleft O(n+1)$ defined by
\[
\pi(\bm{x})\cdot g=\pi(\bm{x}g)
\quad
(\bm{x}\in S^{n},\ {}g\in O(n+1)).
\]
\begin{rem}
The quandle $P^{n}_{\mathbb{R}}$ is introduced by Azcan and Fenn \cite{AzcanFenn1994}. Moreover, the quandle $P^{n}_{\mathbb{R}}$ is faithful.
\end{rem}

Let \(G\) be a Lie group and let \(C_{G}(x)\) be the conjugacy class of \(x\) in \(G\). The natural inclusion \(\operatorname{inc}:C_{G}(x)\to G\) is an immersion since for each $q\in C_{G}(x)$, the differential \(d\operatorname{inc}_q:T_q C_G(x) \to T_q G\) can be identified with the natural inclusion. In particular, if $C_{G}(x)$ is compact, the inclusion \(\operatorname{inc}\) is a smooth embedding. In addition, \(\operatorname{inc}\) is a quandle homomorphism if we regard \(G\) as a conjugation quandle and \(C_G(x)\) as a subquandle. For later reference, we record the preceding observations in the following proposition.

\begin{proposition}
\label{prop_conjugacy-class-inclusion}
Let \(G\) be a Lie group, and let \(C_G(x)=\{g^{-1}xg\mid g\in G\}\) be the conjugacy class of \(x\in G\). Then the natural inclusion \(\iota \colon C_G(x)\hookrightarrow G\) is a smooth immersion. If, in addition, \(C_G(x)\) is compact, then \(\iota\) is a smooth embedding. Moreover, when \(G\) is regarded as a conjugation quandle and \(C_G(x)\) is viewed as a subquandle, the inclusion \(\iota\) is a quandle homomorphism.
\end{proposition}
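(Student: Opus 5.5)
We must prove Proposition \ref{prop_conjugacy-class-inclusion}, which has three parts: the inclusion is an immersion, it is an embedding when $C_G(x)$ is compact, and it is a quandle homomorphism.

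\textbf{Smooth structure.} First we fix the manifold structure on $C_G(x)$. Consider the right action of $G$ on itself by conjugation, $g\cdot h=h^{-1}gh$, and the orbit map
\[
\mu\colon G\to G,\quad \mu(h)=h^{-1}xh .
\]
Let $Z_G(x)=\{h\in G\mid h^{-1}xh=x\}$ be the stabilizer of $x$. It is a closed subgroup, hence an embedded Lie subgroup by Cartan's closed subgroup theorem. Therefore the coset space $Z_G(x)\backslash G$ is a smooth manifold. The map $\mu$ is constant on right cosets $Z_G(x)h$, so it descends to a smooth bijection
\[
\bar\mu\colon Z_G(x)\backslash G\to C_G(x).
\]
We give $C_G(x)$ the smooth structure transported along $\bar\mu$.

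\textbf{Immersion.} It suffices to show that $\bar\mu$, viewed as a map into $G$, is an injective immersion. This is the standard fact that the orbit map of a smooth Lie group action is an equivariant map of constant rank. Its differential at the identity coset has kernel exactly $\operatorname{Lie}(Z_G(x))$: the kernel of $d\mu_e$ consists of those $X\in\mathfrak g$ with $-Xx+xX=0$, and these are precisely the elements whose one-parameter subgroups fix $x$. Hence $d\bar\mu$ is injective at the identity coset. Equivariance under right translations of $G$ then propagates injectivity of $d\bar\mu$ to every point. So the inclusion $\iota\colon C_G(x)\hookrightarrow G$ is a smooth immersion.

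\textbf{Embedding in the compact case.} This step needs care: we must check which topology "$C_G(x)$ is compact" refers to. Take compactness in the manifold topology of $Z_G(x)\backslash G$, which is then compact. An injective immersion from a compact manifold into a Hausdorff manifold is a closed map, hence a homeomorphism onto its image. Therefore $\iota$ is a smooth embedding. (If compactness were meant only in the subspace topology, extra work would be needed. In the paper's applications $G$ itself is compact, so $Z_G(x)\backslash G$ is compact and the distinction does not matter.)

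\textbf{Quandle homomorphism.} For $a=g^{-1}xg$ and $b=k^{-1}xk$ in $C_G(x)$ we have
\[
a\triangleright b=b^{-1}ab=(gk^{-1}xk)^{-1}\,x\,(gk^{-1}xk),
\]
which again lies in $C_G(x)$. So $C_G(x)$ is closed under $\triangleright$ and under $\triangleright^{-1}$ (the latter is conjugation by $b^{-1}$). Thus $C_G(x)$ is a subquandle of $\operatorname{Conj}G$, and the inclusion preserves the operation tautologically.

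The only genuine obstacle is the constant-rank and kernel computation in the immersion step, together with the topological subtlety in the embedding step; everything else is formal.
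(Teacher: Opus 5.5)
Your proof is correct and follows the same route as the paper. The paper simply asserts that \(d\operatorname{inc}_q\) can be identified with the inclusion \(T_qC_G(x)\subset T_qG\), invokes compactness to upgrade the injective immersion to an embedding, and observes that \(C_G(x)\) is a subquandle. Your smooth structure on \(Z_G(x)\backslash G\) and your computation \(\ker d\mu_e=\operatorname{Lie}(Z_G(x))\) supply the justification the paper leaves implicit, and they match the identification \(\operatorname{Stab}(h_n)\backslash SO(n+1)\cong\operatorname{Conj}(h_n)\) the paper uses later. Your caveat about which topology ``compact'' refers to is apt, and reading it in the manifold topology, as you do, is what the paper needs.
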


\subsection{In the case $n=1$}
\label{section_In-the-case-of-n=1}
The embedding $\iota_{1}$ is presented explicitly:
\[
\iota_{1}:
S^{1}\to O(2),
\quad
(\cos{\theta},\sin{\theta})
\mapsto
\begin{pmatrix}
\cos{\theta} & -\sin{\theta}\\
-\sin{\theta} & -\cos{\theta}
\end{pmatrix}.
\]
We prove $\iota_{1}$ is a diffeomorphism and a quandle homomorphism if we regard $S^{1}$ as the one-dimensional spherical quandle and $O(2)$ as a conjugacy quandle. It is easy to see that the map $\iota_{1}$ is a smooth embedding since $\iota_{1}$ is the composition of a diffeomorphism 
\[
S^{1}\to SO(2),
\quad
(\cos{\theta} ,\sin{\theta})
\mapsto
\begin{pmatrix}
\cos{\theta} & -\sin{\theta}\\
\sin{\theta} & \cos{\theta}
\end{pmatrix},
\]
a natural embedding $SO(2)\to O(2)$ and a diffeomorphism
\[
O(2)\to O(2),
\quad
g\mapsto
h_{1}g.
\]
Hence it is enough to see that $\iota_{1}$ is a quandle homomorphism. For each $\theta_{1},\theta_{2}\in\mathbb{R}$, {}
$
(\cos{\theta_{1}},\sin{\theta_{1}})\triangleright(\cos{\theta_{2}},\sin{\theta_{2}})
=
(\cos{(2\theta_{2}-\theta_{1})},\sin{(2\theta_{2}-\theta_{1})})
$. Then
\begin{eqnarray*}
\iota_{1}(\cos{\theta_{1}},\sin{\theta_{1}})\triangleright\iota_{1}(\cos{\theta_{2}},\sin{\theta_{2}})
&=&
\begin{pmatrix}
\cos{(2\theta_{2}-\theta_{1})} & -\sin{(2\theta_{2}-\theta_{1})}\\
-\sin{(2\theta_{2}-\theta_{1})} & -
\cos{(2\theta_{2}-\theta_{1})}
\end{pmatrix}\\
&=&\iota_{1}(\cos{(2\theta_{2}-\theta_{1})},\sin{(2\theta_{2}-\theta_{1})})\\
&=&\iota_{1}((\cos{\theta_{1}},\sin{\theta_{1}})\triangleright(\cos{\theta_{2}},\sin{\theta_{2}})).
\end{eqnarray*}

In Section~\ref{section_SUZUKI_result_S1}, we consider the map \(\iota_1\) constructed here.
\subsection{Lifting orthogonal group actions on spheres}
In this section, we use the discussion in Section \ref{section_Lifting_Lie group_actions} to construct the action of the Spin group on the sphere from the action of the special orthogonal group on the real projective space.
\begin{proposition}
\label{LiftActionRPtoS}
The action $\mathbb{R}P^{n}\curvearrowleft SO(n+1)$ induces an action $S^{n}\curvearrowleft Spin(n+1)$ defined by
\begin{equation}
\label{equation_action_S^n-Spin(n+1)}
\bm{x}\cdot\tilde{g}=\bm{x}p(\tilde{g})
\quad
(\bm{x}\in S^{n},\ {}\tilde{g}\in Spin(n+1)).
\end{equation}
\end{proposition}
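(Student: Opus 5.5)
The plan is to apply the lifting construction of Section~\ref{section_Lifting_Lie group_actions} with $G=SO(n+1)$, $\tilde{G}=Spin(n+1)$, $M=\mathbb{R}P^{n}$ and $N=S^{n}$, with covering $\pi_N=\pi\colon S^n\to\mathbb{R}P^n$. This gives a smooth right action of $Spin(n+1)$ on $S^{n}$, and I would then identify it with the formula \eqref{equation_action_S^n-Spin(n+1)}.

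For $n\geq 2$, the sphere $S^n$ is the universal covering of $\mathbb{R}P^n$, so $\tilde{M}=N$. For $n=1$, $S^{1}\to\mathbb{R}P^{1}$ is just a connected double cover, but Section~\ref{section_Lifting_Lie group_actions} only requires $N$ to be a connected covering, so the construction still applies.

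\textbf{Step 1 (two candidate actions).} The construction yields an action $\bm{x}\ast\tilde{g}$ of $Spin(n+1)$ on $S^n$. Separately, define $\bm{x}\cdot\tilde{g}:=\bm{x}p(\tilde{g})$. The latter is a smooth right action because $p$ is a smooth homomorphism and $S^n\curvearrowleft SO(n+1)$ is a smooth right action. It covers the action on $\mathbb{R}P^n$, since $\pi(\bm{x}p(\tilde g))=\pi(\bm{x})\cdot p(\tilde g)$ by the definition of the action $\mathbb{R}P^{n}\curvearrowleft O(n+1)$.

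\textbf{Step 2 (comparing infinitesimal actions).} For $\tilde{X}\in\mathfrak{spin}(n+1)$ with $p_*\tilde X=X$, differentiating the second action at $t=0$ gives the vector field $\bm{x}\mapsto \frac{d}{dt}\big|_{t=0}\bm{x}\exp(tX)=\bm{x}X$. Applying $d\pi$ to it gives
\[
d\pi(\bm{x}X)=\frac{d}{dt}\Big|_{t=0}\pi(\bm{x}\exp tX)=X_{\mathbb{R}P^n}(\pi(\bm{x})).
\]
By the uniqueness in Proposition~\ref{prop_covering_map_vectorfield}, this vector field equals $\tilde{X}_{S^n}=\tilde\phi(\tilde X)$. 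So both actions have infinitesimal action $\tilde\phi$.

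\textbf{Step 3 (conclusion).} $Spin(n+1)$ is connected and simply connected for $n\ge2$, so the uniqueness part of Theorem~\ref{theorem_Lie-Palais} forces the two actions to coincide. This proves \eqref{equation_action_S^n-Spin(n+1)}.

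I expect the main obstacle to be the case $n=1$. There $Spin(2)\cong SO(2)$ via the double cover, which is connected but not simply connected, so the uniqueness in Theorem~\ref{theorem_Lie-Palais} cannot be quoted directly. I would instead use the general fact that two smooth actions of a connected Lie group with the same infinitesimal action agree. Each one-parameter subgroup $\exp(t\tilde X)$ acts by the flow of the common vector field $\tilde X_{S^n}$, so the two actions agree on $\exp(\tilde{\mathfrak g})$. Since $\exp(\tilde{\mathfrak g})$ generates the connected group, the actions agree everywhere. This argument covers all $n$ at once.

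A smaller point to check is that Step~2 matches the paper's convention $d\tau$ for right actions (an antihomomorphism). Both actions are differentiated by the same formula, so the sign conventions agree automatically.
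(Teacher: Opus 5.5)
Your proposal is correct and is essentially the paper's proof. The paper also computes that the generating vector field of $\bm{x}\mapsto\bm{x}p(\exp t\tilde{X})$ satisfies $d\pi\circ\tilde{X}_{S^{n}}=(p_{*}\tilde{X})_{\mathbb{R}P^{n}}\circ\pi$, and it leaves implicit the two uniqueness statements (Proposition~\ref{prop_covering_map_vectorfield} and the uniqueness in Theorem~\ref{theorem_Lie-Palais}) that you make explicit. Your separate treatment of $n=1$, via agreement on one-parameter subgroups, is sound but not needed, since the paper only applies the proposition for $n\geq 2$.
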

\begin{proof}
Suppose $\tilde{\tau}:Spin(n+1)\to\operatorname{Diff}S^{n}$ is a group antihomomorphism induced by the action defined by Equation (\ref{equation_action_S^n-Spin(n+1)}). It is enough to show
\[
d\pi\circ d\tilde{\tau}\left(\tilde{X}_{S^{n}}\right)
=
\left(p_{*}\tilde{X}\right)_{\mathbb{R}P^{n}}\circ\pi
\]
for any $\tilde{X}\in\frak{spin}(n+1)$. For any $x\in S^{n}$ and $\xi$ which is a smooth function defined in the neighborhood of the point $x$,  
\begin{eqnarray*}
\left(d\pi\circ d\tilde{\tau}\left(\tilde{X}_{S^{n}}\right)(x)\right)(\xi)
=\left.\frac{d}{dt}\right\vert_{t=0}
\xi\circ\pi(xp(\operatorname{exp}t\tilde{X}))
=(\left(p_{*}\tilde{X}\right)_{\mathbb{R}P^{n}}\circ\pi(x))(\xi).
\end{eqnarray*}
Here, we used the fact that $p$ is an adjoint representation of $Spin(n+1)$.
\end{proof}

\subsection{Embedding quandles defined on projective space into Lie groups}
In this section, we show that a natural quandle homomorphism from the spherical quandle to its own inner automorphism group induces an embedding of the real projective space into a Lie group.

Let $n$ be a positive integer, $\operatorname{Conj}(h_{n})$ be a conjugacy class with respect to $h_{n}\in O(n+1)$, and $\operatorname{inn}$ be a map defined by
\begin{equation}\label{equation_inn:Sn-O(n+1)}
\operatorname{inn}:
S^{n}\to O(n+1),
\quad
\bm{e}_{1}g\mapsto g^{-1}h_{n}g.
\end{equation}

\begin{proposition}
\label{prop_welldefinedness_quandle-homness_inn}
The map $\operatorname{inn}$ defined by Equation (\ref{equation_inn:Sn-O(n+1)}) is well-defined and a quandle homomorphism.
\end{proposition}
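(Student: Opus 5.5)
The plan is to treat well-definedness and the homomorphism property separately. Both rest on one observation: $\operatorname{inn}(\bm{x})$ should be the matrix of the inner automorphism $S_{\bm{x}}$, acting on row vectors from the right. Since $\bm{e}_1h_n=\bm{e}_1$ and $h_n$ acts as $-\operatorname{Id}$ on $\bm{e}_1^{\perp}$, the matrix $h_n$ represents $S_{\bm{e}_1}$. Conjugating by $g$ should then move this reflection-type map to the point $\bm{e}_1g$.

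\textbf{Well-definedness.} The action of $O(n+1)$ on $S^n$ is transitive, so every $\bm{x}\in S^n$ can be written as $\bm{e}_1g$. Suppose $\bm{e}_1g=\bm{e}_1g'$. Then $k:=g'g^{-1}$ fixes $\bm{e}_1$. Because $k$ is orthogonal, it preserves $\bm{e}_1^{\perp}$, so $k$ has block form $\operatorname{diag}(1,k')$ with $k'\in O(n)$. Any such matrix commutes with $h_n=\operatorname{diag}(1,-I_n)$. Hence
\[
g'^{-1}h_ng'=g^{-1}k^{-1}h_nkg=g^{-1}h_ng,
\]
and $\operatorname{inn}$ is well defined.

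\textbf{Intrinsic description.} Next I would show that $\operatorname{inn}(\bm{x})$ is the linear map $\bm{v}\mapsto 2\langle\bm{v},\bm{x}\rangle\bm{x}-\bm{v}$. Explicitly, $\operatorname{inn}(\bm{x})=2\,{}^{t}\bm{x}\bm{x}-I$ for row vectors. To check this, take $\bm{x}=\bm{e}_1g$. Using ${}^{t}g=g^{-1}$,
\[
g^{-1}h_ng=g^{-1}(2\,{}^{t}\bm{e}_1\bm{e}_1-I)g=2\,{}^{t}(\bm{e}_1g)(\bm{e}_1g)-I.
\]
This also gives well-definedness a second time, since the right-hand side depends only on $\bm{x}$. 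From this formula, $\bm{v}\,\operatorname{inn}(\bm{x})=\bm{v}\triangleright\bm{x}$ for all $\bm{v}\in S^n$.

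\textbf{Homomorphism property.} Write $\bm{x}=\bm{e}_1g$ and $\bm{y}=\bm{e}_1g'$, and set $Y:=\operatorname{inn}(\bm{y})$. Then $\bm{x}\triangleright\bm{y}=\bm{x}Y=\bm{e}_1(gY)$. By the definition of $\operatorname{inn}$,
\[
\operatorname{inn}(\bm{x}\triangleright\bm{y})=(gY)^{-1}h_n(gY)=Y^{-1}\operatorname{inn}(\bm{x})Y=\operatorname{inn}(\bm{x})\triangleright\operatorname{inn}(\bm{y})
\]
in $\operatorname{Conj}O(n+1)$. Here we use that $Y\in O(n+1)$, which holds since $Y$ is symmetric with $Y^2=I$.

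I expect the only real subtlety to be the right-action convention: keeping row vectors consistent and checking that $g^{-1}h_ng$, rather than $gh_ng^{-1}$, is the correct conjugate. The intrinsic formula settles this convention, and everything else is a routine computation.
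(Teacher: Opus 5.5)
Your proof is correct and is essentially the paper's argument. Well-definedness comes from the stabilizer of $\bm{e}_1$ consisting of matrices $\operatorname{diag}(1,X)$, which commute with $h_n$. The homomorphism property comes from $\bm{x}\triangleright\bm{y}=\bm{e}_1 g\,g'^{-1}h_n g'$ (the paper's formula with $A=g$, $B=g'$), followed by regrouping the conjugate. Your intrinsic formula $\operatorname{inn}(\bm{x})=2\,{}^{t}\bm{x}\bm{x}-I$ repackages the same identity $h_n=2\,{}^{t}\bm{e}_1\bm{e}_1-I$ that the paper uses, with the small bonus of a second, coordinate-free proof of well-definedness.
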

\begin{proof}
First, we prove that the map $\operatorname{inn}$ is well-defined. The isotropy group $\operatorname{Stab}(n,\bm{e}_{1})\subset O(n+1)$ of the action $S^{n}\curvearrowleft O(n+1)$ with respect to $\bm{e}_{1}=(1,0,\cdots,0)\in S^{n}$ is
\[
\operatorname{Stab}(n,\bm{e}_{1})
=
\left\{
\begin{pmatrix}
1 & 0\\
0 & X
\end{pmatrix}
\in O(n+1)
\ :\ {}
X\in O(n)
\right\}
\cong
O(n).
\]
The isotropy group $\operatorname{Stab}(n,h)\subset O(n+1)$ of the action $\operatorname{Conj}(h_{n})\curvearrowleft O(n+1)$ with respect to $h_{n}\in O(n+1)$ is 
\[
\operatorname{Stab}(n,h)
=
\left\{
\begin{pmatrix}
a & 0\\
0 & X
\end{pmatrix}
\in O(n+1)
\ :\ {}
a\in\{\pm 1\},\ {}X\in O(n)
\right\}.
\]
Hence the map $\operatorname{inn}$ is well-defined since $\operatorname{Stab}(n,\bm{e}_{1})$ is a subset of $\operatorname{Stab}(n,h)$.

Second, we prove that the map $\operatorname{inn}$ is a quandle homomorphism. For any $\bm{x},\bm{y}\in S^{n}$, there exists $A,B\in O(n+1)$ such that $\bm{x}=\bm{e}_{1}A$, $\bm{y}=\bm{e}_{1}B$. It follows that
\begin{equation}
\label{equation_x_tri_y=eAB-1hB}
\bm{x}\triangleright\bm{y}=\bm{e}_{1}AB^{-1}h_{n}B
\end{equation}
since
\begin{eqnarray*}
\bm{x}\triangleright\bm{y}
&=&2\langle\bm{x},\bm{y}\rangle\bm{y}-\bm{x}\\
&=&2\bm{e}_{1}AB^{-1}({}^{t}\bm{e}_{1})\bm{e}_{1}B-\bm{e}_{1}A\\
&=&\bm{e}_{1}AB^{-1}(2({}^{t}\bm{e}_{1})\bm{e}_{1}-I)B\\
&=&\bm{e}_{1}AB^{-1}h_{n}B.
\end{eqnarray*}
Then 
\begin{eqnarray*}
\operatorname{inn}(\bm{x}\triangleright\bm{y})
&=&(AB^{-1}h_{n}B)^{-1}h_{n}(AB^{-1}h_{n}B)\\
&=&(B^{-1}h_{n}B)^{-1}(A^{-1}h_{n}A)(B^{-1}h_{n}B)\\
&=&\operatorname{inn}(\bm{x})\triangleright \operatorname{inn}
(\bm{y}).
\end{eqnarray*}
Hence the map $\operatorname{inn}$ is a quandle homomorphism.
\end{proof}
\begin{proposition}\label{prop_iso_RP_Conj(h)}
The quandle homomorphism $\operatorname{inn}:S^n_{\mathbb{R}}\to\operatorname{Conj}O(n+1)$ induces a quandle isomorphism  $i_{n}:P^{n}_{\mathbb{R}}\to\operatorname{Conj}(h_{n})$.
\end{proposition}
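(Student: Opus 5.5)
The plan is to show that $\operatorname{inn}$ is constant on the fibres of $\pi:S^n\to\mathbb{R}P^n$, so that it factors through a map $i_n$, and then to check that $i_n$ is bijective onto $\operatorname{Conj}(h_n)$. The homomorphism property of $i_n$ will then be inherited from Proposition~\ref{prop_welldefinedness_quandle-homness_inn}.

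\emph{Step 1: an explicit formula.} For $\bm{x}=\bm{e}_1 g$ we have
\[
g^{-1}h_n g = g^{-1}\bigl(2\,{}^{t}\bm{e}_1\bm{e}_1 - I\bigr)g = 2\,{}^{t}\bm{x}\,\bm{x} - I,
\]
using ${}^{t}g=g^{-1}$, exactly as in the computation preceding Equation~(\ref{equation_x_tri_y=eAB-1hB}). So $\operatorname{inn}(\bm{x})=2\,{}^{t}\bm{x}\bm{x}-I$, which is the reflection fixing $\bm{x}$ and acting by $-1$ on $\bm{x}^{\perp}$. This expression is visibly invariant under $\bm{x}\mapsto-\bm{x}$. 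Hence $\operatorname{inn}=i_n\circ\pi$ for a unique map $i_n:\mathbb{R}P^n\to O(n+1)$.

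\emph{Step 2: the image is $\operatorname{Conj}(h_n)$.} The image of $\operatorname{inn}$ is $\{g^{-1}h_ng : g\in O(n+1)\}$, because $O(n+1)$ acts transitively on $S^n$. For $n$ odd this set is exactly the conjugacy class of $h_n$ in $O(n+1)$. For $n$ even, if $\operatorname{Conj}(h_n)$ is meant inside $SO(n+1)$, we still get the whole class: we may replace $g$ by $(-I)g\in SO(n+1)$, since $-I$ is central and has determinant $-1$ in odd dimension $n+1$. Either way $i_n$ is surjective onto $\operatorname{Conj}(h_n)$.

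\emph{Step 3: injectivity.} Suppose $2\,{}^{t}\bm{x}\bm{x}-I = 2\,{}^{t}\bm{y}\bm{y}-I$. Then the rank-one projections ${}^{t}\bm{x}\bm{x}$ and ${}^{t}\bm{y}\bm{y}$ coincide. Applying both to ${}^{t}\bm{x}$ gives ${}^{t}\bm{x} = \langle\bm{x},\bm{y}\rangle\,{}^{t}\bm{y}$, so $\bm{x}=\pm\bm{y}$, that is, $\pi(\bm{x})=\pi(\bm{y})$. Equivalently, in the language of the proof of Proposition~\ref{prop_welldefinedness_quandle-homness_inn}: $\operatorname{Stab}(n,h)$ is exactly the stabiliser of the set $\{\pm\bm{e}_1\}$. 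So the fibres of the orbit map to $\operatorname{Conj}(h_n)$ are precisely the $\sim_n$-classes.

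\emph{Step 4: homomorphism.} For $\bm{x},\bm{y}\in S^n$,
\[
i_n(\pi\bm{x}\triangleright\pi\bm{y}) = i_n(\pi(\bm{x}\triangleright\bm{y})) = \operatorname{inn}(\bm{x}\triangleright\bm{y}) = \operatorname{inn}\bm{x}\triangleright\operatorname{inn}\bm{y}.
\]
The first equality uses that $\sim_n$ is a congruence, and the last is Proposition~\ref{prop_welldefinedness_quandle-homness_inn}. A bijective quandle homomorphism is a quandle isomorphism.

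If smoothness is also wanted, $i_n$ is the smooth map $[\bm{x}]\mapsto 2\,{}^{t}\bm{x}\bm{x}-I$. It is an injective immersion of a compact manifold, whose differential is injective because the differential of $\bm{x}\mapsto{}^{t}\bm{x}\bm{x}$ on $T_{\bm{x}}S^n$ is injective. Hence it is an embedding, consistent with Proposition~\ref{prop_conjugacy-class-inclusion}.

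The only delicate point is Step 2 for $n$ even: one has to pin down in which group the conjugacy class is taken, and observe that the central element $-I$ makes the $O(n+1)$- and $SO(n+1)$-classes agree.
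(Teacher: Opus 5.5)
Your proof is correct and has the same structure as the paper's: factor $\operatorname{inn}$ through $\pi$, show that the induced map on $\mathbb{R}P^{n}$ is injective, inherit the homomorphism property from Proposition~\ref{prop_welldefinedness_quandle-homness_inn}, and observe that the image is $\operatorname{Conj}(h_{n})$. The only difference is how the fibres of $\operatorname{inn}$ are identified: the paper uses the stabilizer comparison $g_{1}g_{2}^{-1}\in\operatorname{Stab}(n,h)\Leftrightarrow\bm{e}_{1}g_{1}\sim_{n}\bm{e}_{1}g_{2}$ (which you also mention), whereas you read them off from the explicit formula $\operatorname{inn}(\bm{x})=2\,{}^{t}\bm{x}\bm{x}-I$; your even-$n$ remark about $SO(n+1)$ is correct but not needed, since the paper defines $\operatorname{Conj}(h_{n})$ as the $O(n+1)$-conjugacy class.
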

\begin{proof}
 For \(g_{1},g_{2}\in O(n+1)\),
\begin{eqnarray*}
\operatorname{inn}(\bm{e}_{1}g_{1})=\operatorname{inn}(\bm{e}_{1}g_{2})
&\Longleftrightarrow&
g_{1}g_{2}^{-1}\in\operatorname{Stab}(n,h)\\
&\Longleftrightarrow&
\bm{e}_{1}g_{1}\sim_{n}\bm{e}_{1}g_{2}.
\end{eqnarray*}
Therefore, \(\operatorname{inn}\) induces an injection \(\operatorname{inn}^{\prime}:P_{\mathbb{R}}^{n} \to O(n+1)\) such that the following diagram commutes
\begin{equation*}
\xymatrix{
S^{n}_{\mathbb{R}}\ar[r]^-{\operatorname{inn}}\ar[d]_-{\pi}&O(n+1)\ar@{}@<-2.0ex>[dl]|{}\\
P^{n}_{\mathbb{R}}\ar[ur]_-{\operatorname{inn}^{\prime}}
}.
\end{equation*}
By Proposition~\ref{prop_welldefinedness_quandle-homness_inn}, \(\operatorname{inn}'\) is an injective quandle homomorphism. Since the image of $\operatorname{inn}^{\prime}$ coincides with $\operatorname{Conj}(h_{n})$, \(\operatorname{inn}^{\prime}\) induces a quandle isomorphism
\[
i_{n}:P^{n}_{\mathbb{R}}\to\operatorname{Conj}(h_{n}),
\quad
\pi(\bm{e}_{1}g)\mapsto g^{-1}h_{n}g
\]
such that the following diagram commutes
\begin{equation*}
\xymatrix{
P^{n}_{\mathbb{R}}\ar[r]^-{\operatorname{inn}^{\prime}}\ar[d]_-{i_{n}}&O(n+1)\ar@{}@<-2.0ex>[dl]|{}\\
\operatorname{Conj}(h_{n})\ar[ur]_-{\operatorname{inc}}
}.
\end{equation*}
\end{proof}
\begin{rem}\label{rem_diffeo_RP_Conj(h)}
The map $i_{n}$ is a diffeomorphism.
\end{rem}
The map \(\operatorname{inn}^{\prime}\) is an smooth embedding satisfying Question~\ref{conj_embedding_of_smooth_quandles}.

\subsection{A covering space defined over a conjugacy class of a Lie group}
Let \(n\) be a positive integer. In this section, we construct the universal covering of the conjugacy class \(\operatorname{Conj}(h_{n})\) and we study the lifted action of the conjugacy action $\operatorname{Conj}(h_{n})\curvearrowleft SO(n+1)$ via the universal covering of \(\operatorname{Conj}(h_{n})\). 

Throughout this section, we denote elements of Spin and Pin groups by symbols with a tilde, such as \(\tilde{g}\), and elements of the groups \(O(n)\) and \(SO(n)\) by symbols without a tilde, such as \(g\). This is merely a notational device intended to make it easier to distinguish which Lie group an element belongs to. Note that the notation \(\tilde{g}\) does not necessarily mean that it is a lift of an element \(g\) in \(O(n)\) or \(SO(n)\).

The conjugacy class of $O(n+1)$
\[
\operatorname{Conj}({h}_{n})
=
\{
g^{-1}h_{n}g
\mid{}
g\in O(n+1)
\}
\]
has  the right action of \(SO(n+1)\) defined by conjugation. The action is smooth and is also transitive by Equation~\eqref{equation_coset_decomposition_O(n)}.

We define a conjugacy class of \(Pin^{+}(n+1)\) 
\[
\operatorname{Conj}(\tilde{h}_{n})
:=
\left\{
\tilde{g}^{-1}\tilde{h}_{n}\tilde{g}
\mid{}
\tilde{g}\in Pin^{+}(n+1)
\right\}.
\]
We define the smooth right action of \(Spin(n+1)\) on \(\operatorname{Conj}(\tilde{h}_{n})\) by conjugation. The action is transitive because of the following lemma.
\begin{lemma}
Let \(n\geq 2\) be an integer, and let
\(p:Pin^{+}(n+1)\to O(n+1)\) be the double covering. If \(\tilde{z}\in p^{-1}(\{-I\})\), then
\begin{equation}
\label{equation_decomposition_Pin_by_Spin}
Pin^{+}(n+1)=
\begin{cases}
Spin(n+1)\sqcup \tilde{h}_{n}Spin(n+1), & \text{if } n \text{ is odd},\\
Spin(n+1)\sqcup \tilde{z}\,Spin(n+1), & \text{if } n \text{ is even}.
\end{cases}
\end{equation}
Moreover, when \(n\) is even, \(\tilde{h}_{n}\tilde{z}=\tilde{z}\tilde{h}_{n}\).
\end{lemma}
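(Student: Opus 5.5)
Our plan is to pull the coset decomposition back along the double covering $p:Pin^{+}(n+1)\to O(n+1)$. The starting point is Equation~\eqref{equation_coset_decomposition_O(n)} applied to $O(n+1)$.

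First we identify the preimage of $SO(n+1)$. Since $p$ is a surjective homomorphism with kernel $\{\pm1\}\cong\mathbb{Z}/2\mathbb{Z}$, the set $p^{-1}(SO(n+1))$ is a subgroup of index $2$ in $Pin^{+}(n+1)$. For $n\geq2$ the group $SO(n+1)$ is connected with $\pi_1\cong\mathbb{Z}/2\mathbb{Z}$. The restriction of $p$ to this preimage is a nontrivial (connected) double cover; this is the standard fact that $-1$ lies in the identity component of the Clifford group, being joined to $1$ by $t\mapsto\cos t+\sin t\,e_1e_2$. Hence $p^{-1}(SO(n+1))$ is the universal cover, i.e. $p^{-1}(SO(n+1))=Spin(n+1)$, and it is the identity component of $Pin^{+}(n+1)$.

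Next we choose coset representatives. Because $Spin(n+1)$ has index $2$, we have $Pin^{+}(n+1)=Spin(n+1)\sqcup\tilde{w}\,Spin(n+1)$ for any $\tilde w$ with $p(\tilde w)\notin SO(n+1)$. If $n$ is odd, then $n+1$ is even, $\det h_n=(-1)^n=-1$, and $h_n\notin SO(n+1)$, so we may take $\tilde w=\tilde h_n$. If $n$ is even, then $n+1$ is odd and $\det(-I)=-1$, so we may take $\tilde w=\tilde z$. These choices match the two cases of Equation~\eqref{equation_coset_decomposition_O(n)} for $O(n+1)$, and in each case the union is disjoint since the cosets are distinct.

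Finally we prove the commutation relation for $n$ even. Since $-I$ is central in $O(n+1)$, we get $p(\tilde h_n\tilde z\tilde h_n^{-1}\tilde z^{-1})=I$, so the commutator $c=\tilde h_n\tilde z\tilde h_n^{-1}\tilde z^{-1}$ lies in $\{\pm1\}$. To show $c=1$, we compute in the Clifford algebra, and this sign is the main obstacle. We write $\tilde h_n=\pm e_2e_3\cdots e_{n+1}$, a product of $n$ vectors, which indeed lies in $Spin$ because $n$ is even. We write $\tilde z=\pm e_1e_2\cdots e_{n+1}$, a product of $n+1$ vectors. These are lifts for the twisted adjoint action used in $Pin^{\pm}$: reflections in $e_2,\dots,e_{n+1}$ give $h_n$, and reflections in all $e_i$ give $-I$. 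The volume element $\omega=e_1\cdots e_{n+1}$ commutes with every $e_i$ when $n+1$ is odd, because moving $e_i$ past $\omega$ picks up $(-1)^{n}$. Therefore $\omega$ is central in the even subalgebra, and in particular $\tilde z$ commutes with $\tilde h_n$. As an alternative that avoids conventions, we can use continuity: $\tilde z\tilde g\tilde z^{-1}\tilde g^{-1}$ is a continuous $\{\pm1\}$-valued function on the connected group $Spin(n+1)$ that equals $1$ at $\tilde g=1$. Hence $\tilde z$ commutes with all of $Spin(n+1)\ni\tilde h_n$.
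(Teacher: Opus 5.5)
Your proposal is correct. The coset decomposition is the same as in the paper, but you argue it more carefully. The paper just invokes the decomposition of $Pin^{+}(n+1)$ into $Spin(n+1)$-cosets. You show that $p^{-1}(SO(n+1))$ is connected, via the path $\cos t+\sin t\,e_1e_2$ from $1$ to $-1$, and hence equals $Spin(n+1)$. That identification is also what puts $\tilde h_n$ in $Spin(n+1)$ for even $n$, which the paper uses without comment.

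For the commutation relation, your second, convention-free argument is exactly the paper's proof. The paper considers $f(\tilde x)=\tilde x^{-1}\tilde z^{-1}\tilde x\tilde z$ and notes that $p\circ f$ is trivial because $-I$ is central. So $f$ takes values in $\operatorname{Ker}p\cong\mathbb{Z}/2\mathbb{Z}$, and connectedness of $Spin(n+1)$ forces $f\equiv 1$ there.

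Your primary argument through the Clifford algebra is a genuinely different route. You identify the lifts explicitly as $\pm e_2\cdots e_{n+1}$ and $\pm e_1\cdots e_{n+1}$. You then use that the volume element commutes with every $e_i$ when $n+1$ is odd, so it is in fact central in the whole Clifford algebra. This gives a concrete picture, but it depends on realizing $p$ as the twisted adjoint representation on the Clifford-algebra model. The topological argument needs only three facts: $-I$ is central in $O(n+1)$, $\operatorname{Ker}p$ is discrete, and $Spin(n+1)$ is connected. That is why it fits the paper, which treats $Pin^{+}(n+1)$ only as an abstract double cover of $O(n+1)$.
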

\begin{proof}
By considering the decomposition of \(Pin^{+}(n)\) into cosets with respect to the subgroup \(Spin(n)\), we obtain Equation~\eqref{equation_decomposition_Pin_by_Spin}.

Let \(f:Pin^{+}(n+1)\to Pin^{+}(n+1)\) be the smooth map given by
\[
f(\tilde{x})=\tilde{x}^{-1}\tilde{z}^{-1}\tilde{x}\tilde{z}
\qquad (\tilde{x}\in Pin^{+}(n+1)).
\]
Since the image of \(p\circ f\) is a trivial subgroup of \(O(n+1)\), the image of \(f\) is a subgroup of \(\operatorname{Ker}p=p^{-1}(\{I\})\cong\mathbb{Z}/2\mathbb{Z}\). Moreover, since the image \(f(Spin(n+1))\) is topologically connected, the image \(f(Spin(n+1))\) is a trivial subgroup of \(Pin^{+}(n+1)\). Hence, when \(n\) is even, we obtain \(\tilde{h}_{n}\tilde{z}=\tilde{z}\tilde{h}_{n}\).
\end{proof}

We construct the universal covering of the conjugacy class \(\operatorname{Conj}(h_{n})\).
\begin{proposition}
\label{prop_UniversalCovering_AConjugacyClass}
Suppose the map $p:Pin^{+}(n+1)\to O(n+1)$ is the double covering of $O(n+1)$ and $\tilde{h}_{n}\in Pin^{+}(n+1)$ is a point in the fiber over $h_{n}$ with respect to the covering $p$. Then the map
\[
\pi_{h}:\operatorname{Conj}(\tilde{h}_{n})\to\operatorname{Conj}(h_{n}),
\quad
x\mapsto p(x)
\]
is well-defined and is the universal covering of $\operatorname{Conj}(h_{n})$.
\end{proposition}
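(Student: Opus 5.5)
Well-definedness comes first and is immediate. Since $p$ is a group homomorphism, $p(\tilde g^{-1}\tilde h_n\tilde g)=p(\tilde g)^{-1}h_np(\tilde g)$, and this lies in $\operatorname{Conj}(h_n)$. So $\pi_h$ is the restriction of $p$ to $\operatorname{Conj}(\tilde h_n)$, and it is smooth. Surjectivity also follows directly: $p$ is onto $O(n+1)$, so every $g^{-1}h_ng$ equals $p(\tilde g^{-1}\tilde h_n\tilde g)$ for some lift $\tilde g$ of $g$.

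For the covering property I will work with homogeneous spaces. Both spaces are compact orbits, so they are embedded submanifolds, as in Proposition~\ref{prop_conjugacy-class-inclusion}. By the transitivity established above (Equation~\eqref{equation_coset_decomposition_O(n)} and the preceding Lemma), $SO(n+1)$ acts transitively on $\operatorname{Conj}(h_n)$ and $Spin(n+1)$ acts transitively on $\operatorname{Conj}(\tilde h_n)$. Hence
\[
\operatorname{Conj}(\tilde h_n)\cong \tilde K\backslash Spin(n+1),\qquad \operatorname{Conj}(h_n)\cong K\backslash SO(n+1),
\]
where $\tilde K$ and $K$ are the centralizers of $\tilde h_n$ and $h_n$. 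The map $\pi_h$ is then induced by $p$ and is equivariant in the sense of Proposition~\ref{prop_comm_action_N}. Because $p(\tilde K)\subset K$, and $\tilde K\subset p^{-1}(K)$ with index at most $2$ (the kernel of $p$ is central, of order $2$), the induced map $\tilde K\backslash Spin(n+1)\to p^{-1}(K)\backslash Spin(n+1)=K\backslash SO(n+1)$ is a finite covering of degree $[p^{-1}(K):\tilde K]\in\{1,2\}$. The same holds for the identification with $\operatorname{Conj}(h_n)\cong\mathbb{R}P^n$ from Remark~\ref{rem_diffeo_RP_Conj(h)}.

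It remains to show that this is the universal covering. For $n\ge2$ we have $\pi_1(\mathbb{R}P^n)=\mathbb{Z}/2$, so it suffices to show that the degree is $2$ and that $\operatorname{Conj}(\tilde h_n)$ is connected. Connectedness follows from transitivity of the connected group $Spin(n+1)$. Degree $2$ means that $-\tilde h_n:=\epsilon\tilde h_n$, where $\epsilon$ is the nontrivial kernel element, lies in $\operatorname{Conj}(\tilde h_n)$. Equivalently, some element of $p^{-1}(K)\cap Spin(n+1)$ anticommutes with $\tilde h_n$ up to $\epsilon$. This is the main obstacle.

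My first attempt will be a path argument. Choose a loop in $\operatorname{Conj}(h_n)$ representing the generator of $\pi_1(\mathbb{R}P^n)$: for instance, conjugate $h_n$ by rotations $R_t$ in the $(e_1,e_2)$-plane for $t\in[0,\pi]$, so that $R_\pi$ acts as $-1$ on $e_1$ and $e_2$ and $R_\pi\in K$. Lift $R_t$ to a path $\tilde R_t$ in $Spin(n+1)$. The endpoint $\tilde R_\pi^{-1}\tilde h_n\tilde R_\pi$ is then a lift of $h_n$, and the claim is that it equals $\epsilon\tilde h_n$. This can be checked in the Clifford algebra. Take $\tilde h_n$ to be $e_1$ or $e_2\cdots e_{n+1}$, up to sign, depending on the Pin convention; the reflection in $e_1^\perp$ has $e_1$ as a lift, so $h_n$, being a reflection composed with $-I$, has an explicit Clifford lift. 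Take $\tilde R_\pi=e_1e_2$. A direct computation shows that $e_1e_2$ anticommutes with $e_1$ and likewise with $e_2\cdots e_{n+1}$, giving the sign $\epsilon=-1$. Since these two candidate lifts of $h_n$ differ only by the central element $e_1\cdots e_{n+1}$ or a sign, the check reduces to a single computation.

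The case $n=1$ would need separate care, since there $\pi_1(\mathbb{R}P^1)=\mathbb{Z}$. If the statement is intended only for $n\ge2$, as the preceding Lemma suggests, I would restrict to that case.
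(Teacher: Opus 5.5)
Your proof is correct for $n\ge 2$, but it reaches universality by a different route from the paper. The paper gets universality from simple connectivity of the total space rather than from a degree count. It notes that $\operatorname{Stab}(\tilde h_n)$ is the fixed-point group of the involution $\tilde g\mapsto\tilde h_n^{-1}\tilde g\tilde h_n$ of the compact simply connected group $Spin(n+1)$. It then uses Borel's theorem to conclude that this group is connected, so that $\operatorname{Stab}(\tilde h_n)\backslash Spin(n+1)$ is simply connected. Finally it factors $\pi_h$ through $\operatorname{Stab}(h_n)_0\backslash SO(n+1)\cong S^n$, followed by the two-sheeted covering onto $\operatorname{Stab}(h_n)\backslash SO(n+1)$.

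You instead identify $\pi_h$ with $\tilde K\backslash Spin(n+1)\to p^{-1}(K)\backslash Spin(n+1)$, a covering of degree $[p^{-1}(K):\tilde K]$. Your index bound holds because $\tilde g\mapsto\tilde h_n^{-1}\tilde g^{-1}\tilde h_n\tilde g$ is a homomorphism $p^{-1}(K)\to\operatorname{Ker}p$ with kernel $\tilde K$. You then exhibit $e_1e_2\in p^{-1}(K)\setminus\tilde K$ to force degree $2$, and finish with connectedness and $\pi_1(\mathbb{R}P^n)\cong\mathbb{Z}/2\mathbb{Z}$.

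Your version is elementary and exhibits the nontrivial deck transformation $\tilde h_n\mapsto\epsilon\tilde h_n$ explicitly. It also sidesteps Borel's theorem and the simplicity hypothesis the paper invokes when citing Helgason, which fails for $\mathfrak{so}(4)$. In effect your computation shows $p(\tilde K)=\operatorname{Stab}(h_n)_0$, which is the concrete content of the connectedness the paper extracts from Borel. The paper's version proves simple connectivity without using $\pi_1$ of the base. It also transfers directly to conjugacy classes of involutions in other simply connected compact groups, which is the structure that generalizes.

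A few minor remarks:

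\textbf{Mixed conventions.} Under the twisted adjoint action, $e_1$ covers the reflection in $e_1^\perp$, which is $-h_n$; it covers $h_n$ only under the untwisted action. Similarly, $e_1\cdots e_{n+1}$ is central only when $n+1$ is odd. It does always commute with the even element $e_1e_2$, which is all you need, and you check both candidate lifts directly anyway.

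\textbf{A convention-free sign check.} Your path argument closes without any Clifford model. Since $h_n^{-1}R_th_n=R_{-t}$, unique path lifting gives $\tilde h_n^{-1}\tilde R_t\tilde h_n=\tilde R_{-t}$. Together with $\tilde R_{2\pi}=\epsilon$, this yields $\tilde R_\pi^{-1}\tilde h_n\tilde R_\pi=\epsilon\tilde h_n$.

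\textbf{The case $n=1$.} Restricting to $n\ge 2$ is necessary, not merely convenient. For $n=1$ the space $\operatorname{Conj}(\tilde h_1)$ is compact, so it cannot be the universal covering of the circle $\operatorname{Conj}(h_1)$.
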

\begin{proof}
We denote the Lie algebra of $SO(n+1)$ as $\frak{so}(n+1)$. The differential at the identity of the involution of $Spin(n+1)$
\[
\tilde{\Theta}:Spin(n+1)\to Spin(n+1),
\quad
\tilde{g}\mapsto \tilde{h}_{n}^{-1}\tilde{g}\tilde{h}_{n}
\]
coincides with the involution of $\mathfrak{so}(n+1)$, which is isomorphic to the Lie algebra of $Spin(n+1)$,
\[
\theta:\frak{so}(n+1)\to\frak{so}(n+1),
\quad
X\mapsto h_{n}Xh_{n}(=h_{n}^{-1}Xh_{n}=h_{n}Xh_{n}^{-1}).
\]
Suppose $Spin(n+1)$ acts on $\operatorname{Conj}(\tilde{h}_{n})$ on the right by conjugation. Then the isotropy group of $\tilde{h}_{n}$ is 
\[
\operatorname{Stab}(\tilde{h}_{n})=
\{
\tilde{g}\in Spin(n+1)
\mid
\tilde{g}\tilde{h}_{n}=\tilde{h}_{n}\tilde{g}
\}.
\]
There exists a natural diffeomorphism
\begin{equation}\label{equation_diffeo_S}
\operatorname{Stab}(\tilde{h}_{n})\backslash Spin(n+1)
\to
\operatorname{Conj}(\tilde{h}_{n}),
\quad
\operatorname{Stab}(\tilde{h}_{n})\tilde{g}
\mapsto
\tilde{g}^{-1}\tilde{h}_{n}\tilde{g}.
\end{equation}
Moreover $\operatorname{Stab}(\tilde{h}_{n})$  coincides with an isotropy group $\tilde{K}$ of the Cartan involution $\tilde{\Theta}:Spin(n+1)\to Spin(n+1)$. By \cite [Theorem 3.4]{Borel1961}, $\tilde{K}=\operatorname{Stab}(\tilde{h}_{n})$ is topologically connected.

The conjugacy class of $\operatorname{Conj}({h}_{n})$ is also a conjugacy class of $SO(n+1)$. The isotropy group of ${h}_{n}$ with respect to the action is
\[
\left\{
\begin{pmatrix}
1 & \\
 & X_{+}
\end{pmatrix}
\ {:}\ {}
X_{+}\in SO(n)
\right\}
\sqcup
\left\{
\begin{pmatrix}
-1 & \\
 & X_{-}
\end{pmatrix}
\ {:}\ {}
X_{-}\in O(n)\setminus SO(n)
\right\}
\]
and we denote the isotropy group as $\operatorname{Stab}({h}_{n})$. Then, there exists a natural diffeomorphism
\begin{equation}\label{equation_diffeo_RP}
\operatorname{Stab}(h_{n})\backslash SO(n+1)
\to
\operatorname{Conj}(h_{n}),
\quad
\operatorname{Stab}(h_{n}){g}
\mapsto
{g}^{-1}h_{n}g.
\end{equation}
Moreover the unit component of $\operatorname{Stab}({h}_{n})$ is
\[
\operatorname{Stab}(h_{n})_{0}:=
\left\{
\begin{pmatrix}
1 & \\
 & X_{+}
\end{pmatrix}
\ {:}\ {}
X_{+}\in SO(n)
\right\}.
\]

Based on what has been discussed so far, we construct a universal covering of $\operatorname{Stab}({h}_{n})$. According to \cite[p.52 Proposition 4]{chevalley1947theory}, the map
\[
\operatorname{Stab}({h}_{n})_{0}\backslash SO(n+1)
\to
\operatorname{Stab}({h}_{n})\backslash SO(n+1),
\quad
\operatorname{Stab}(h_{n})_{0}g\mapsto\operatorname{Stab}(h_{n})g
\]
is a covering. In particular, the degree of the covering is two since the cardinality of $\operatorname{Stab}({h}_{n})_{0}\backslash\operatorname{Stab}({h}_{n})$ is two. Since $\tilde{K}=\operatorname{Stab}(\tilde{h}_{n})$ is topologically connected, the Lie algebra $\frak{so}(2n+1)$ is simple, and $\operatorname{Stab}(h_{n})_{0}$ is closed, we can use \cite[Proposition 3.6]{helgason1979differential}, and the map
\[
\tilde{K}\backslash Spin(n+1)
\to
\operatorname{Stab}({h}_{n})_{0}\backslash SO(n+1),
\quad
\tilde{K}\tilde{g}\mapsto\operatorname{Stab}(h_{n})_{0}p(\tilde{g})
\]
is a universal covering. Then the composition map of them
\[
\tilde{K}\backslash Spin(n+1)
\to
\operatorname{Stab}({h}_{n})\backslash SO(n+1),
\quad
\tilde{K}\tilde{g}\mapsto\operatorname{Stab}(h_{n})p(\tilde{g})
\]
is a universal covering
. Using the diffeomorphisms defined by Equation  (\ref{equation_diffeo_S}) and Equation (\ref{equation_diffeo_RP}), the map 
\[
\pi_{h}:\operatorname{Conj}(\tilde{h}_{n})\to\operatorname{Conj}(h_{n}),
\quad
x\mapsto p(x)
\]
is also a universal covering.
\end{proof}

We consider the lift of the conjugacy action $\operatorname{Conj}(h_{n})\curvearrowleft SO(n+1)$ via \(\pi_h\).
\begin{proposition}\label{prop_lifting_conj-conj}
An action
$\operatorname{Conj}(\tilde{h}_{n})\curvearrowleft Spin(n+1)$ induced by the action $\operatorname{Conj}(h_{n})\curvearrowleft SO(n+1)$ defined by conjugation and the universal covering $\pi_{h}:\operatorname{Conj}(\tilde{h}_{n})\to\operatorname{Conj}(h_{n})$ constructed in Proposition \ref{prop_UniversalCovering_AConjugacyClass}  coincides with an action defined by conjugation.
\end{proposition}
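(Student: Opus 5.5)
Our strategy follows the proof of Proposition~\ref{LiftActionRPtoS}. The lifted action constructed in Section~\ref{section_Lifting_Lie group_actions} is characterized by its infinitesimal generators. Indeed, by the uniqueness part of the Lie--Palais theorem (Theorem~\ref{theorem_Lie-Palais}), applied to the simply connected group $Spin(n+1)$, a smooth right action $\tilde{\tau}$ of $Spin(n+1)$ on $\operatorname{Conj}(\tilde{h}_{n})$ coincides with the lifted action as soon as
\[
d\pi_{h}\circ d\tilde{\tau}(\tilde{X})=\left(p_{*}\tilde{X}\right)_{\operatorname{Conj}(h_{n})}\circ\pi_{h}
\qquad (\tilde{X}\in\mathfrak{spin}(n+1)).
\]
Uniqueness of the lifted vector field (Proposition~\ref{prop_covering_map_vectorfield}) then shows that $d\tilde{\tau}(\tilde{X})$ is exactly the lift $\tilde{X}_{N}$ of $(p_{*}\tilde{X})_{\operatorname{Conj}(h_n)}$. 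So we take $\tilde{\tau}$ to be the conjugation action $\tilde{x}\cdot\tilde{g}=\tilde{g}^{-1}\tilde{x}\tilde{g}$ and verify this identity.

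\textbf{Smoothness.} First we check that conjugation is a well-defined smooth right action on $\operatorname{Conj}(\tilde{h}_{n})$. It preserves the conjugacy class by definition. It is smooth because $\operatorname{Conj}(\tilde{h}_{n})$ carries the manifold structure coming from the diffeomorphism \eqref{equation_diffeo_S}, and under that identification conjugation by $Spin(n+1)$ becomes right multiplication on $\tilde{K}\backslash Spin(n+1)$.

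\textbf{The key computation.} Fix $\tilde{x}\in\operatorname{Conj}(\tilde{h}_{n})$, a germ $\xi$ at $\pi_{h}(\tilde{x})$, and $\tilde{X}\in\mathfrak{spin}(n+1)$. We compute
\[
\left.\frac{d}{dt}\right\vert_{t=0}\xi\circ\pi_{h}\bigl(\exp(-t\tilde{X})\,\tilde{x}\,\exp(t\tilde{X})\bigr).
\]
Since $p$ is a group homomorphism with $p(\exp t\tilde{X})=\exp t\,p_{*}\tilde{X}$, we have
\[
\pi_{h}\bigl(\exp(-t\tilde{X})\tilde{x}\exp(t\tilde{X})\bigr)=\exp(-t\,p_{*}\tilde{X})\,p(\tilde{x})\,\exp(t\,p_{*}\tilde{X}).
\]
This is the flow of $(p_{*}\tilde{X})_{\operatorname{Conj}(h_n)}$ through $\pi_{h}(\tilde{x})$ for the conjugation action of $SO(n+1)$. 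The derivative is therefore $\bigl((p_{*}\tilde{X})_{\operatorname{Conj}(h_{n})}\circ\pi_{h}(\tilde{x})\bigr)(\xi)$, which is the required identity.

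\textbf{Equivalently, via Proposition~\ref{prop_comm_action_N}.} The same point can be phrased as a commutative square: $\pi_{h}(\tilde{x}\cdot\tilde{g})=\pi_{h}(\tilde{x})\cdot p(\tilde{g})$, by the homomorphism property of $p$. Conjugation is thus a lift of the $SO(n+1)$-action, and so is the constructed action. Two lifts of the same action along the covering $\pi_h$ that agree on the identity component of the group must agree. Here they agree because both are determined by the same lifted vector fields, and $Spin(n+1)$ is connected, being generated by $\exp$ of its Lie algebra.

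The only point needing care is the step that compares flows rather than actions. One must check that the curve $t\mapsto \exp(-t\tilde{X})\tilde{x}\exp(t\tilde{X})$ stays inside $\operatorname{Conj}(\tilde{h}_{n})$ and is smooth there, so that its velocity is a genuine tangent vector of the immersed submanifold. The smoothness established above settles this, and the rest is the routine chain-rule computation.
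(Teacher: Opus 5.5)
Your proposal is correct and follows essentially the same route as the paper: you reduce, via uniqueness of lifted vector fields and of the Lie--Palais integration, to the infinitesimal identity $d\pi_{h}\circ \tilde{X}_{\operatorname{Conj}(\tilde{h}_{n})}=(p_{*}\tilde{X})_{\operatorname{Conj}(h_{n})}\circ\pi_{h}$, and then verify it by differentiating $\xi\circ\pi_{h}(\exp(-t\tilde{X})\tilde{x}\exp(t\tilde{X}))$, using that $p$ is a homomorphism with $p(\exp t\tilde{X})=\exp(t\,p_{*}\tilde{X})$. Your extra remarks make explicit what the paper's ``it is enough to prove'' leaves implicit: smoothness of conjugation via the identification with $\tilde{K}\backslash Spin(n+1)$, and the explicit appeal to uniqueness, for which connectedness of $Spin(n+1)$ already suffices.
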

\begin{proof}
We use the argument of Section~\ref{section_Lifting_Lie group_actions}. It is enough to prove that 
\[
d\pi_{h}\circ \tilde{X}_{\operatorname{Conj}(\tilde{h}_{n})}
=
\left(p_{*}\tilde{X}\right)_{\operatorname{Conj}(h_{n})}\circ\pi_{h}
\]
for any $\tilde{X}\in\mathfrak{spin}(n+1)$. For any $x\in \operatorname{Conj}(\tilde{h}_{n})$ and $\xi$ which is a smooth function defined in a neighborhood of the point $\pi_{h}(x)$, using the fact
of the differential of smooth maps, we get
\begin{eqnarray*}
\left(d\pi_{h}\circ \tilde{X}_{\operatorname{Conj}(\tilde{h}_{n})}(x)\right)(\xi)
&=&\left.\frac{d}{dt}\right\vert_{t=0}
\xi\circ\pi_{h}(\operatorname{exp}(-t\tilde{X})x\operatorname{exp}(t\tilde{X}))\\
&=&(\left(p_{*}\tilde{X}\right)_{\operatorname{Conj}(h_{n})}\circ\pi_{h}(x))(\xi)
\end{eqnarray*}
Here, we used the fact that $p$ is an adjoint representation of $Spin(n+1)$.
\end{proof}

\subsection{The proof of Theorem~\ref{main_theorem} for $n\geq 2$}
\label{subsection_proof_main_theorem}
In this subsection, we prove Theorem~\ref{main_theorem} for $n\geq2$, using what we have prepared so far. Using Proposition~\ref{prop_UniversalCovering_AConjugacyClass}, the map $\pi_h:\operatorname{Conj}(\tilde{h}_{n})\to\operatorname{Conj}(h_{n})$ is a universal covering. By Proposition~\ref{prop_iso_RP_Conj(h)} and Remark~\ref{rem_diffeo_RP_Conj(h)}, the diffeomorphism $i_{n}$ induces a diffeomorphism $\iota_{n}:S^{n}_{\mathbb{R}}\xrightarrow{\sim}\operatorname{Conj}(\tilde{h}_{n})$ such that the following diagram commutes
\[
  \begin{CD}
     (S^{n}_{\mathbb{R}},\bm{e}_{1}) @>{\iota_{n}}>> (\operatorname{Conj}(\tilde{h}_{n}),\tilde{h}_{n})\\
  @V{\pi}VV    @V{\pi_h}VV \\
    (P^{n}_{\mathbb{R}},\pi(\bm{e}_{1}))    @>{i_{n}}>>  (\operatorname{Conj}(h_{n}),{h}_{n})
  \end{CD}.
\]
Furthermore, by Proposition~\ref{prop_action_vs_covering_morphism}, the diffeomorphism \(\iota_n\) is \(Spin(n+1)\)-equivariant with respect to the actions defined in Propositions~\ref{LiftActionRPtoS} and~\ref{prop_lifting_conj-conj}.

The diffeomorphism $\iota_{n}$ is also a quandle homomorphism.
\begin{proposition}
\label{prop_iota_n_construction}
Let \(n\geq2\) be an integer. The map $\iota_{n}$ is a quandle homomorphism.
\end{proposition}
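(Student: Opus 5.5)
We will show $\iota_n(\bm{x}\triangleright\bm{y})=\iota_n(\bm{y})^{-1}\iota_n(\bm{x})\iota_n(\bm{y})$ by combining $Spin(n+1)$-equivariance of $\iota_n$ with a computation at the base point. The preliminary step is to describe $\iota_n$ explicitly. Since $Spin(n+1)$ acts transitively on $S^n$ through $p$ (Proposition~\ref{LiftActionRPtoS}), every point has the form $\bm{x}=\bm{e}_1p(\tilde{g})$ with $\tilde g\in Spin(n+1)$. The equivariance of $\iota_n$, together with $\iota_n(\bm{e}_1)=\tilde h_n$ and Proposition~\ref{prop_lifting_conj-conj}, then gives
\[
\iota_n(\bm{e}_1p(\tilde g))=\tilde g^{-1}\tilde h_n\tilde g .
\]

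Now fix $\bm{x}=\bm{e}_1p(\tilde a)$ and $\bm{y}=\bm{e}_1p(\tilde b)$. Equation~\eqref{equation_x_tri_y=eAB-1hB} gives $\bm{x}\triangleright\bm{y}=\bm{e}_1p(\tilde a)p(\tilde b)^{-1}h_np(\tilde b)$. We want to write this as $\bm{e}_1p(\tilde c)$ with $\tilde c=\tilde a\tilde b^{-1}\tilde h_n\tilde b$, because then the formula above yields
\[
\iota_n(\bm{x}\triangleright\bm{y})=\tilde c^{-1}\tilde h_n\tilde c=(\tilde b^{-1}\tilde h_n\tilde b)^{-1}(\tilde a^{-1}\tilde h_n\tilde a)(\tilde b^{-1}\tilde h_n\tilde b),
\]
which is $\iota_n(\bm x)\triangleright\iota_n(\bm y)$ in $\operatorname{Conj}Pin^+(n+1)$. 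The difficulty is that $\tilde c$ lies in $Spin(n+1)$ only when $\tilde h_n\in Spin(n+1)$, that is, when $n$ is even. In that case the argument above is already complete.

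The main obstacle is the odd case, where $\tilde h_n\notin Spin(n+1)$ and the formula must be extended to $\tilde c\in Pin^+(n+1)$. To do this we extend the action of Proposition~\ref{LiftActionRPtoS} to $Pin^+(n+1)$ by $\bm{x}\cdot\tilde g=\bm{x}p(\tilde g)$, and we check that
\[
\iota_n(\bm{x}p(\tilde g))=\tilde g^{-1}\iota_n(\bm x)\tilde g
\]
also holds for $\tilde g=\tilde h_n$. By the coset decomposition~\eqref{equation_decomposition_Pin_by_Spin} and the equivariance already established for $Spin(n+1)$, this single case is enough. Both sides, viewed as functions of $\bm x$, are lifts along $\pi_h$ of the same map $\bm x\mapsto h_n^{-1}i_n(\pi(\bm x))h_n$, because $i_n$ is $O(n+1)$-equivariant by its explicit formula. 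The sphere is connected for $n\ge2$, so two such lifts agree everywhere once they agree at one point. At $\bm{x}=\bm e_1$ we have $\bm e_1h_n=\bm e_1$ and $\tilde h_n^{-1}\tilde h_n\tilde h_n=\tilde h_n$, so the two lifts coincide there. This gives equivariance for all of $Pin^+(n+1)$, and the computation of the previous paragraph then goes through with $\tilde c\in Pin^+(n+1)$.
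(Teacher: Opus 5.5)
Your proof is correct. For even $n$ it coincides with the paper's Case~1: your $\tilde c=\tilde a\,\iota_n(\bm y)$ lies in $Spin(n+1)$. For odd $n$ you take a genuinely different route.

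\textbf{The paper's route (odd $n$).} The paper never leaves $Spin(n+1)$. It observes that $\bm x=\bm e_1A$ is fixed by $p(\iota_n(\bm x))=A^{-1}h_nA$. Hence
\[
\bm x\triangleright\bm y=\bm e_1A\,(A^{-1}h_nA)(B^{-1}h_nB)=\bm x\cdot\bigl(\iota_n(\bm x)\iota_n(\bm y)\bigr).
\]
Here $\iota_n(\bm x)\iota_n(\bm y)$ is a product of two elements of the nontrivial coset, so it lies in $Spin(n+1)$. The already established $Spin(n+1)$-equivariance then gives
\[
\bigl(\iota_n(\bm x)\iota_n(\bm y)\bigr)^{-1}\iota_n(\bm x)\bigl(\iota_n(\bm x)\iota_n(\bm y)\bigr)=\iota_n(\bm y)^{-1}\iota_n(\bm x)\iota_n(\bm y)
\]
by pure cancellation.

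\textbf{Your route (odd $n$).} You instead upgrade the equivariance to all of $Pin^+(n+1)$, using one more uniqueness-of-lifts argument through $\pi_h$ anchored at the $h_n$-fixed point $\bm e_1$. This step is sound:
\begin{itemize}
\item $\bm x\mapsto\iota_n(\bm xh_n)$ and $\bm x\mapsto\tilde h_n^{-1}\iota_n(\bm x)\tilde h_n$ both lift $\bm x\mapsto h_n^{-1}i_n(\pi(\bm x))h_n$.
\item The two lifts agree at $\bm e_1$.
\item The set of $\tilde g$ for which equivariance holds is closed under products. Since it contains $Spin(n+1)$ and $\tilde h_n$, the coset decomposition shows it is all of $Pin^+(n+1)$.
\end{itemize}

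\textbf{Comparison.} The paper's trick is more economical: it needs no topology beyond what was used to construct $\iota_n$. Yours costs one extra covering argument but yields a stronger by-product. The formula of Remark~\ref{rem_concreat_presentation_iota_n}, $\iota_n(\bm e_1p(\tilde g))=\tilde g^{-1}\tilde h_n\tilde g$, then holds for every $\tilde g\in Pin^+(n+1)$. As a result, the homomorphism property follows from a single computation that does not depend on the parity of $n$.
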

\begin{proof}For any $\bm{x},\bm{y}\in S^{n}$, there exists $A,B\in SO(n+1)$ such that $\bm{x}=\bm{e}_{1}A$, $\bm{y}=\bm{e}_{1}B$. We divide the proof into two cases according to whether \(n\) is even or odd.

\noindent\textbf{Case 1: \(n\) is even.} Since \(\det(B^{-1}h_n B)=1\), we have \(B^{-1}h_n B \in SO(n+1)\). Hence, \(\iota_{n}(\bm{y}) \in Spin(n+1)\). Then, by Proposition \ref{prop_action_vs_covering_morphism} and Equation~\eqref{equation_x_tri_y=eAB-1hB} in the proof of Proposition~\ref{prop_welldefinedness_quandle-homness_inn}, we obtain
\begin{eqnarray*}
\iota_{n}(\bm{x}\triangleright\bm{y})
&=&\iota_{n}(\bm{e}_{1}AB^{-1}h_{n}B)\\
&=&\iota_{n}(\bm{e}_{1}A\cdot\iota_{n}(\bm{y}))\\
&=&\iota_{n}(\bm{x})\cdot \iota_{n}(\bm{y})\\
&=&\iota_{n}(\bm{y})^{-1}\iota_{n}(\bm{x})\iota_{n}(\bm{y})\\
&=&\iota_{n}(\bm{x})\triangleright\iota_{n}(\bm{y}).
\end{eqnarray*}

\noindent\textbf{Case 2: \(n\) is odd.} Since \(\det(A^{-1}h_{n}AB^{-1}h_{n}B)=1\), we have \(A^{-1}h_{n}AB^{-1}h_n B \in SO(n+1)\). Hence, \(\iota_{n}(\bm{x})\iota_{n}(\bm{y}) \in Spin(n+1)\). Then, by Proposition~\ref{prop_action_vs_covering_morphism} and Equation~\eqref{equation_x_tri_y=eAB-1hB} in the proof of Proposition~\ref{prop_welldefinedness_quandle-homness_inn}, we obtain
\begin{eqnarray*}
\iota_{n}(\bm{x}\triangleright\bm{y})
&=&\iota_{n}(\bm{e}_{1}AB^{-1}h_{n}B)\\
&=&\iota_{n}(\bm{e}_{1}AA^{-1}h_{n}AB^{-1}h_{n}B)\\
&=&\iota_{n}(\bm{e}_{1}A\cdot(\iota_{n}(\bm{x})\iota_{n}(\bm{y})))\\
&=&\iota_{n}(\bm{x})\cdot (\iota_{n}(\bm{x})\iota_{n}(\bm{y}))\\
&=&(\iota_{n}(\bm{x})\iota_{n}(\bm{y}))^{-1}\iota_{n}(\bm{x})(\iota_{n}(\bm{x})\iota_{n}(\bm{y}))\\
&=&\iota_{n}(\bm{y})^{-1}\iota_{n}(\bm{x})\iota_{n}(\bm{y})\\
&=&\iota_{n}(\bm{x})\triangleright\iota_{n}(\bm{y}).
\end{eqnarray*}
Therefore, the diffeomorphism $\iota_{n}$ is a quandle homomorphism.
\end{proof}

When \(n\) is even, by Proposition~\ref{prop_conjugacy-class-inclusion}, the natural inclusion map \(\operatorname{inc}:\operatorname{Conj}(\tilde{h}_{n})\to Spin(n+1)\) is both a smooth embedding and a quandle homomorphism when we regard \(Spin(n+1)\) as a conjugation quandle. Then the composition \(\operatorname{inc}\circ\iota_{n}:S^{n}\to Spin(n+1)\) is a smooth embedding and a quandle homomorphism. By a slight abuse of notation, we denote the map \(\operatorname{inc} \circ \iota_n\) simply by \(\iota_n\). Then we obtain Theorem~\ref{main_theorem} for even \(n\).

When \(n\) is odd and \(n\geq3\), the natural inclusion \(\operatorname{inc}:\operatorname{Conj}(h_{n})\to Pin^{+}(n+1)\) is also a smooth embedding and a quandle homomorphism by the same argument. By a slight abuse of notation, we denote the smooth embedding \(\operatorname{inc} \circ \iota_n:S^{n}\to Pin^{+}(n+1)\) simply by \(\iota_n\). Then we obtain Theorem~\ref{main_theorem} for odd \(n\geq 3\). In Section~\ref{section_SUZUKI_result_S3}, we consider the map \(\iota_3\) constructed here.
\begin{rem}
\label{rem_concreat_presentation_iota_n}
The embedding \(\iota_n\) can be described explicitly as follows, using the action defined by Equation~\eqref{equation_action_S^n-Spin(n+1)} in Proposition~\ref{LiftActionRPtoS}:
\[
\iota_{n}(\bm{e}_{1}\cdot\tilde{g})=\tilde{g}^{-1}\tilde{h}_{n}\tilde{g}
\quad
\left(
\tilde{g}\in Spin(n+1)
\right).
\]
\end{rem}

\section{Bergman's embedding and Akita's embedding}
\label{section_preliminaries_Bergman_Akita_embedding}
Throughout this section, let \(G\) be a group. In the first half of this section, we present two embeddings of quandles associated with \(G\). More precisely, we describe Bergman's embedding \cite{Bergman2021core} of the core quandle \(\operatorname{Core} G\), and Akita's embedding \cite{Akita2022embedding} of the twisted conjugation quandle \(\operatorname{Conj}(G,\psi)\) determined by an automorphism \(\psi \in \operatorname{Aut} G\). In the second half of this section, we state an observation that holds when \(G\) is abelian and $\psi$ is the inversion automorphism.

\subsection{Construction of embeddings}
We first construct Bergman's embedding of the core quandle into a group. Consider the unit group \(\mathbb{Z}^{\times}=\{\pm 1\}\), and define a group homomorphism \(\operatorname{Sw}:\mathbb{Z}^{\times}\to \operatorname{Aut}(G\times G)\) by
\begin{eqnarray*}
1&\mapsto& [(g,h)\mapsto(g,h)]=\operatorname{id}_{G\times G},\\
-1&\mapsto& [(g,h)\mapsto (h,g)].
\end{eqnarray*}
In what follows, we write \(\operatorname{Sw}_{1}:=\operatorname{Sw}(1)\) and \(\operatorname{Sw}_{-1}:=\operatorname{Sw}(-1)\). We endow the set \(G\times G\times \mathbb{Z}^{\times}\) with the structure of the (external) semidirect product determined by \(\operatorname{Sw}\). Namely, define a binary operation \((G\times G\times\mathbb{Z}^{\times})\times (G\times G\times\mathbb{Z}^{\times})\to G\times G\times\mathbb{Z}^{\times}\) by
\[
(g_1,h_1,a)\cdot(g_2,h_2,b)
:=
\big(\operatorname{Sw}_{b}(g_{1},h_{1})\cdot (g_{2},h_{2}),\,ab\big),
\quad
(g_{1},g_{2},h_{1},h_{2}\in G,\ a,b\in\mathbb{Z}^{\times}),
\]
where the product \(\operatorname{Sw}_{b}(g_{1},h_{1})\cdot (g_{2},h_{2})\) is taken in the direct product \(G\times G\). We denote the resulting group by \((G\times G)\rtimes_{\operatorname{Sw}}\mathbb{Z}^{\times}\). When we later replace \(G\) by a specific group, we will continue to use the same symbol \(\operatorname{Sw}\) by a slight abuse of notation.
\begin{theorem}[Bergman \cite{Bergman2021core}]
The map \(f_{\operatorname{B}}:\operatorname{Core} G \to \operatorname{Conj}\big((G\times G)\rtimes_{\operatorname{Sw}}\mathbb{Z}^{\times}\big)\) defined by
\[
f_{\operatorname{B}}(g):=(g,g^{-1},-1)
\quad
(g\in G)
\]
is a quandle embedding, that is, an injective quandle homomorphism.
\end{theorem}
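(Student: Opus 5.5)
We must prove Bergman's theorem: the map $f_{\operatorname{B}}(g)=(g,g^{-1},-1)$ is an injective quandle homomorphism from $\operatorname{Core}G$ into $\operatorname{Conj}\big((G\times G)\rtimes_{\operatorname{Sw}}\mathbb{Z}^{\times}\big)$. We use the usual core operation $g\triangleright h=hg^{-1}h$. If the paper's convention later turns out to be the mirror one, the same computation adapts by symmetry. The argument is a direct computation in the semidirect product, organised as follows.

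\textbf{Step 1: injectivity and group formulas.} Injectivity is immediate, since the first coordinate of $f_{\operatorname{B}}(g)$ is $g$. Next we record two formulas from the definition of the product. For $(g,h,-1)$ and $(u,v,-1)$ we have
\[
(g,h,-1)\cdot(u,v,-1)=(hu,gv,1),
\]
because $\operatorname{Sw}_{-1}$ swaps the first factor before multiplying. For the inverse, $(g,h,-1)^{-1}=(h^{-1},g^{-1},-1)$, which one checks by the same rule: $(g,h,-1)(h^{-1},g^{-1},-1)=(hh^{-1},gg^{-1},1)$. In particular $f_{\operatorname{B}}(h)^{-1}=(h,h^{-1},-1)=f_{\operatorname{B}}(h)$, so every element in the image is an involution.

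\textbf{Step 2: the homomorphism property.} Since $f_{\operatorname{B}}(h)$ is an involution, the conjugate equals $f_{\operatorname{B}}(h)\,f_{\operatorname{B}}(g)\,f_{\operatorname{B}}(h)$. By the first formula,
\[
f_{\operatorname{B}}(h)f_{\operatorname{B}}(g)=(h^{-1}g,\,hg^{-1},\,1).
\]
Multiplying on the right by $(h,h^{-1},-1)$ uses the rule with $b=-1$: swap the first factor to get $(hg^{-1},h^{-1}g)$, then multiply componentwise. This gives
\[
(hg^{-1}h,\;h^{-1}gh^{-1},\;-1)=\big(hg^{-1}h,\;(hg^{-1}h)^{-1},\;-1\big)=f_{\operatorname{B}}(hg^{-1}h)=f_{\operatorname{B}}(g\triangleright h).
\]

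\textbf{Main obstacle.} The only delicate point is bookkeeping: the semidirect product rule swaps the left factor according to the right factor's sign, and the conjugation convention is $g\triangleright h=h^{-1}gh$. Both must be tracked carefully, and the core operation's convention must match. If it does not, one would instead compose with the inversion, or use $f(g)=(g^{-1},g,-1)$. With the convention $hg^{-1}h$, the computation above closes the proof.
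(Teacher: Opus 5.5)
Your proof is correct. With the paper's rule $(g_1,h_1,a)\cdot(g_2,h_2,b)=(\operatorname{Sw}_b(g_1,h_1)\cdot(g_2,h_2),ab)$, each $f_{\operatorname{B}}(h)$ is an involution, and
\[
f_{\operatorname{B}}(h)f_{\operatorname{B}}(g)f_{\operatorname{B}}(h)=(hg^{-1}h,\,h^{-1}gh^{-1},\,-1)=f_{\operatorname{B}}(hg^{-1}h).
\]
Injectivity follows from the first coordinate.

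The paper gives no proof of this statement; it simply quotes Bergman. So there is no argument of the paper's to compare against, and your direct verification is all that is needed.

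Your closing hedge about conventions is unnecessary. The paper never writes out the core operation, but $g\triangleright h=hg^{-1}h$ is the one it uses implicitly. This is forced by $\mathcal{I}_1$ and $\mathcal{I}_2$ being quandle isomorphisms from spherical quandles, since $2\langle x,y\rangle y-x=y\bar{x}y$ for unit quaternions. The would-be mirror $h^{-1}gh^{-1}$ is not even a quandle operation, because it violates $g\triangleright g=g$.
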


Next we construct Akita's embedding. Let \(\psi\in \operatorname{Aut}G\) be an automorphism of \(G\). We endow the set \(G\times \mathbb{Z}\) with the structure of the (external) semidirect product determined by \(\psi\). Namely, define a binary operation \((G\times \mathbb{Z})\times (G\times \mathbb{Z})\to G\times \mathbb{Z}\) by
\[
(g,m)\cdot (h,n):=(\psi^{n}(g)h,\, m+n),
\quad (g,h\in G,\ m,n\in \mathbb{Z}).
\]
We denote the resulting group by \(G\rtimes_{\psi}\mathbb{Z}\). When we later replace \(G\) with a specific group, we will continue to use the same symbol \(\psi\) with a slight abuse of notation. We define the twisted conjugation quandle associated with \(\psi\) with the binary operation \(\triangleright:G\times G\to G\) by
\[
g\triangleright h := \psi(h^{-1}g)\,h,\quad(g,h\in G).
\]
Then \((G,\triangleright)\) is a quandle. We denote it by \(\operatorname{Conj}(G,\psi)\).

\begin{theorem}[Akita \cite{Akita2022embedding}]
The map \(f_{\operatorname{A}}:\operatorname{Conj}(G,\psi)\to\operatorname{Conj}(G\rtimes_{\psi}\mathbb{Z})\) defined by
\[
f_{\operatorname{A}}(g):=(g,1)
\quad
(g\in G)
\]
is a quandle embedding, that is, an injective quandle homomorphism.
\end{theorem}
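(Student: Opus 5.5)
The statement to prove is Akita's theorem: the map \(f_{\operatorname{A}}(g)=(g,1)\) is an injective quandle homomorphism \(\operatorname{Conj}(G,\psi)\to\operatorname{Conj}(G\rtimes_{\psi}\mathbb{Z})\). The plan is a direct computation in the semidirect product \(G\rtimes_{\psi}\mathbb{Z}\), using the multiplication rule \((g,m)\cdot(h,n)=(\psi^{n}(g)h,m+n)\) fixed above.

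Injectivity is immediate: \((g,1)=(g',1\)) forces \(g=g'\). It remains to check the homomorphism property.

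\textbf{Step 1: the inverse.} For \((h,1)\) we look for \((k,-1)\) with \((k,-1)\cdot(h,1)=(\psi(k)h,0)=(e,0)\). This gives \(k=\psi^{-1}(h^{-1})\), so
\[
(h,1)^{-1}=(\psi^{-1}(h^{-1}),-1).
\]
One should also check this is a right inverse: \((h,1)\cdot(\psi^{-1}(h^{-1}),-1)=(\psi^{-1}(h)\psi^{-1}(h^{-1}),0)=(e,0)\).

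\textbf{Step 2: the conjugate.} In \(\operatorname{Conj}\) the operation is \(x\triangleright y=y^{-1}xy\). First compute
\[
(\psi^{-1}(h^{-1}),-1)\cdot(g,1)=(\psi(\psi^{-1}(h^{-1}))g,0)=(h^{-1}g,0).
\]
Then
\[
(h^{-1}g,0)\cdot(h,1)=(\psi(h^{-1}g)h,1).
\]
Hence
\[
f_{\operatorname{A}}(g)\triangleright f_{\operatorname{A}}(h)=(\psi(h^{-1}g)h,1)=f_{\operatorname{A}}(g\triangleright h),
\]
which is exactly the twisted conjugation operation.

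The only real obstacle is bookkeeping with the convention that \(\psi^{n}\) acts on the left factor. One must consistently apply \(\psi^{n}\), with \(n\) the \(\mathbb{Z}\)-coordinate of the right-hand factor, and not mix this up with the opposite convention. Once Step 1 is done carefully, the rest is routine.

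It is not required, but one could also verify that \(\operatorname{Conj}(G,\psi)\) is a quandle as a corollary of this computation. Axiom (Q3) transfers from \(\operatorname{Conj}\) via the injective homomorphism, (Q1) is immediate, and (Q2) follows from \(g\mapsto\psi(h^{-1}g)h\) being a bijection.
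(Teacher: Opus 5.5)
Your proof is correct: with the paper's conventions \((g,m)\cdot(h,n)=(\psi^{n}(g)h,m+n)\) and \(x\triangleright y=y^{-1}xy\) in \(\operatorname{Conj}\), you get \((h,1)^{-1}=(\psi^{-1}(h^{-1}),-1)\) and \((h,1)^{-1}(g,1)(h,1)=(\psi(h^{-1}g)h,1)=f_{\mathrm{A}}(g\triangleright h)\), and injectivity is immediate. The paper gives no proof of this statement (it only cites Akita), so there is no route to compare; your direct semidirect-product computation is the standard verification, in the same style as the paper's explicit checks for \(\iota_G\) and \(\gamma\).
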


For the embeddings \(f_{\mathrm{B}}\) and \(f_{\mathrm{A}}\), we will continue to use the same notation even when \(G\) is replaced by a specific group by a slight abuse of notation.

\subsection{Coincidence in the Abelian case}
In this subsection, we state results that hold when the group \(G\) is abelian.

In general, given an automorphism \(\psi\in \operatorname{Aut}G\), define a binary operation \(\triangleright:G\times G\to G\) by
\[
g\triangleright h := \psi(gh^{-1})\,h,\quad(g,h\in G).
\]
Then \((G,\triangleright)\) is a quandle. This quandle is called the generalized Alexander quandle, and we denote it by \(\operatorname{Alex}(G,\psi)\).

When \(G\) is abelian, several quandles introduced above coincide.
\begin{proposition}
\label{prop_abelian_coincidence_Core_ConjAkita}
Assume that \(G\) is an abelian group. Let \(\operatorname{Inv}\in \operatorname{Aut}G\) be the inversion automorphism, \(\operatorname{Inv}(g)=g^{-1}\) for \(g\in G\). Then the twisted conjugation quandle \(\operatorname{Conj}(G,\operatorname{Inv})\), the generalized Alexander quandle \(\operatorname{Alex}(G,\operatorname{Inv})\), and the core quandle \(\operatorname{Core}(G)\) have the same quandle operation and hence coincide.
\end{proposition}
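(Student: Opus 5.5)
The plan is to compute each of the three binary operations directly from its definition, specialize to $\psi=\operatorname{Inv}$, and use commutativity of $G$ to reduce every one of them to the single expression $g^{-1}h^{2}$. The core quandle $\operatorname{Core}(G)$ has not been formally defined in the text above. I will use the standard definition from Joyce and Bergman, $g\triangleright h:=hg^{-1}h$ for $g,h\in G$.

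First, for the twisted conjugation quandle $\operatorname{Conj}(G,\operatorname{Inv})$, the definition $g\triangleright h=\psi(h^{-1}g)\,h$ with $\psi=\operatorname{Inv}$ gives
\[
g\triangleright h=(h^{-1}g)^{-1}h=g^{-1}hh=g^{-1}h^{2}.
\]
Note that this uses only the group axioms, not commutativity.

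Second, for the generalized Alexander quandle $\operatorname{Alex}(G,\operatorname{Inv})$, the definition $g\triangleright h=\psi(gh^{-1})\,h$ gives
\[
g\triangleright h=(gh^{-1})^{-1}h=hg^{-1}h.
\]
This already coincides with the core operation $hg^{-1}h$ for arbitrary $G$. Since $G$ is abelian, it also equals $g^{-1}h^{2}$.

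Therefore all three operations equal $g^{-1}h^{2}$ on the common underlying set $G$, so the three quandles are literally the same quandle. There is no real obstacle here. The only point needing care is fixing the convention for $\operatorname{Core}(G)$, namely $hg^{-1}h$ rather than $gh^{-1}g$, so that it matches the right-action convention for $\triangleright$ used throughout the paper. With the other convention, one would simply interchange the roles of the arguments, and commutativity still identifies the operations with the ones above.
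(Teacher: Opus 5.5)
Your proof is correct. With $\psi=\operatorname{Inv}$, the twisted conjugation operation is $g^{-1}h^{2}$ and the generalized Alexander operation is $hg^{-1}h$, and commutativity identifies both with the core operation $hg^{-1}h$; this is indeed the right convention, being the one compatible with the paper's Bergman embedding $f_{\mathrm{B}}$ and with the later maps $\mathcal{I}_1,\mathcal{I}_2$. The paper states this proposition without a written proof, and your direct computation is exactly the intended verification.
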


Define a group homomorphism \(\xi:\mathbb{Z}\to \mathbb{Z}^{\times}\) by
\[
\xi(n)=(-1)^n,\quad(n\in \mathbb{Z}).
\]
Then we obtain a short exact sequence
$
1
\to2\mathbb{Z}
\to\mathbb{Z}
\xrightarrow{\xi}\mathbb{Z}^{\times}
\to1
$.

Suppose that the automorphism \(\psi\in \operatorname{Aut}G\) is involutive, that is, \(\psi^{2}=\operatorname{id}_{G}\). Then, for every integer \(n\), we have
\[
\psi^{n}=
\begin{cases}
\operatorname{id}_{G} & \text{if \(n\) is even},\\
\psi & \text{if \(n\) is odd}.
\end{cases}
\]
In view of this, we can define a well-defined binary operation \((G\times \mathbb{Z}/2\mathbb{Z})\times (G\times \mathbb{Z}/2\mathbb{Z})\to G\times \mathbb{Z}/2\mathbb{Z}\) by
\[
(g,m+2\mathbb{Z})\cdot(h,n+2\mathbb{Z})
:=
\big(\psi^{n}(g)\,h,\; m+n+2\mathbb{Z}\big),
\quad(g,h\in G,\ m,n\in \mathbb{Z}).
\]
This turns \(G\times \mathbb{Z}/2\mathbb{Z}\) into a group. For convenience, we denote this group by \(G\rtimes_{\psi}\mathbb{Z}/2\mathbb{Z}\). As before, when we later replace \(G\) by a specific group, we will continue to use the same symbol \(\psi\) by a slight abuse of notation.

\begin{proposition}
Let \(G\) be an abelian group, and let \(\operatorname{Inv}\in \operatorname{Aut}G\) be the involutive automorphism given by inversion, \(\operatorname{Inv}(g):=g^{-1}\) for \(g\in G\). Then one can define an injective group homomorphism
\(\iota_G: G\rtimes_{\operatorname{Inv}}\mathbb{Z}/2\mathbb{Z}\to (G\times G)\rtimes_{\operatorname{Sw}}\mathbb{Z}^{\times}\) by
\[
\iota_{G}(g,m+2\mathbb{Z}) := (g,g^{-1},\xi(m)),
\quad
(g\in G,\ m\in\mathbb{Z}),
\]
and this is well defined.
\end{proposition}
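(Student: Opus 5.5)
Since \(\operatorname{Inv}\) is an involution, \(G\rtimes_{\operatorname{Inv}}\mathbb{Z}/2\mathbb{Z}\) is a group by the construction just given, so we must show that \(\iota_G\) is well defined, multiplicative and injective. For well-definedness, the only choice is the representative \(m\) of \(m+2\mathbb{Z}\). Since \(\xi(m)=(-1)^m\) depends only on the parity of \(m\) (equivalently, \(\xi\) kills \(2\mathbb{Z}\) by the short exact sequence \(1\to2\mathbb{Z}\to\mathbb{Z}\xrightarrow{\xi}\mathbb{Z}^{\times}\to1\)), the triple \((g,g^{-1},\xi(m))\) is independent of this choice.

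For the homomorphism property, I would take \((g,m+2\mathbb{Z}),(h,n+2\mathbb{Z})\) and compute both sides. On the source side the product is \((\operatorname{Inv}^{n}(g)h,\,m+n+2\mathbb{Z})\), so its image is
\[
\bigl(\operatorname{Inv}^{n}(g)h,\ (\operatorname{Inv}^{n}(g)h)^{-1},\ \xi(m)\xi(n)\bigr).
\]
On the target side,
\[
(g,g^{-1},\xi(m))\cdot(h,h^{-1},\xi(n))=\bigl(\operatorname{Sw}_{\xi(n)}(g,g^{-1})\cdot(h,h^{-1}),\ \xi(m)\xi(n)\bigr).
\]
I would split into two cases by the parity of \(n\).
\begin{itemize}
\item If \(n\) is even, then \(\operatorname{Sw}_{1}=\operatorname{id}\) and \(\operatorname{Inv}^{n}=\operatorname{id}\). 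Both first two coordinates become \((gh,\,g^{-1}h^{-1})\), and \(g^{-1}h^{-1}=(gh)^{-1}\) because \(G\) is abelian.
\item If \(n\) is odd, then \(\operatorname{Sw}_{-1}(g,g^{-1})=(g^{-1},g)\), giving \((g^{-1}h,\,gh^{-1})\). The source side gives \((g^{-1}h,\,(g^{-1}h)^{-1})=(g^{-1}h,\,h^{-1}g)\), which agrees again by commutativity.
\end{itemize}
The third coordinates agree because \(\xi\) is a homomorphism.

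For injectivity, suppose \(\iota_G(g,m+2\mathbb{Z})=\iota_G(h,n+2\mathbb{Z})\). Comparing first coordinates gives \(g=h\). Comparing third coordinates gives \(\xi(m)=\xi(n)\), so \(m\equiv n \pmod 2\). Alternatively, one checks that the kernel is trivial.

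The only point needing care is the odd case of the multiplicativity check: one must apply the swap convention \(\operatorname{Sw}_b\) to the left factor exactly as in the definition of the product, and use commutativity of \(G\) to identify \(gh^{-1}\) with \(h^{-1}g\). Everything else is routine.
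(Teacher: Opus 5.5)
Your proposal is correct and follows the paper's proof: well-definedness and injectivity come from $\xi(m)$ depending only on the parity of $m$. Multiplicativity is checked by the same case split on the parity of $n$, comparing $(gh,g^{-1}h^{-1})$ with $(gh,h^{-1}g^{-1})$ and $(g^{-1}h,gh^{-1})$ with $(g^{-1}h,h^{-1}g)$ via commutativity of $G$.
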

\begin{proof}
That \(\iota_G\) is well defined and injective follows from the fact that, for each integer \(m\),
\[
\xi(m)=
\begin{cases}
1 & \text{if \(m\) is even},\\
-1 & \text{if \(m\) is odd}.
\end{cases}
\]

We next show that \(\iota_G\) is a group homomorphism. Let \(g,h\in G\) and \(m,n\in\mathbb{Z}\). Then
\begin{eqnarray*}
\iota_G(g,m+2\mathbb{Z})\cdot \iota_G(h,n+2\mathbb{Z})
&=&
\big(\operatorname{Sw}_{\xi(n)}(g,g^{-1})\cdot (h,h^{-1}),\,\xi(m+n)\big)\\
&=&
\begin{cases}
(gh,\, g^{-1}h^{-1},\, \xi(m+n)) & \text{if \(n\) is even},\\[2mm]
(g^{-1}h,\, gh^{-1},\, \xi(m+n)) & \text{if \(n\) is odd}.
\end{cases}
\end{eqnarray*}
On the other hand,
\begin{eqnarray*}
\iota_G\big((g,m+2\mathbb{Z})\cdot(h,n+2\mathbb{Z})\big)
&=&
(\operatorname{Inv}^{n}(g)h,\;(\operatorname{Inv}^{n}(g)h)^{-1},\;\xi(m+n))\\
&=&
\begin{cases}
(gh,\, h^{-1}g^{-1},\, \xi(m+n)) & \text{if \(n\) is even},\\[2mm]
(g^{-1}h,\, h^{-1}g,\, \xi(m+n)) & \text{if \(n\) is odd}.
\end{cases}
\end{eqnarray*}
Since \(G\) is abelian, these two expressions coincide, and therefore \(\iota_G\) is a group homomorphism.
\end{proof}

At the end of this section, after a brief preparation, we describe the relationship between Bergman's embedding and Akita's embedding when \(G\) is abelian. There is an induced injective quandle homomorphism
$
\operatorname{inj}:\operatorname{Im} f_{\operatorname{A}} \to\operatorname{Conj}\bigl(G\rtimes_{\psi}\mathbb{Z}/2\mathbb{Z}\bigr)
$
making the following diagram commute.
\[
 \xymatrix@!C{
 \operatorname{Conj}(G,\psi)\ar[r]^-{f_{\operatorname{A}}}\ar[d]^-{\operatorname{im}f_{\operatorname{A}}} &\operatorname{Conj}(G\rtimes_{\psi}\mathbb{Z}) \ar[d]^-{\operatorname{id}\times\operatorname{proj}}\\
 \operatorname{Im}f_{\operatorname{A}} \ar[r]^-{\operatorname{inj}} & {\operatorname{Conj}(G\rtimes_{\psi}\mathbb{Z}/2\mathbb{Z})}
}
.\]
Here \(\operatorname{im} f_{\operatorname{A}}\) denotes the canonical surjection induced by \(f_{\operatorname{A}}\), and \(\operatorname{proj}:\mathbb{Z}\to \mathbb{Z}/2\mathbb{Z}\) is the natural projection. In what follows, we sometimes identify \(\operatorname{inj}\circ \operatorname{im} f_{\operatorname{A}}\) with \(f_{\operatorname{A}}\).

\begin{proposition}
\label{prop_Suzuki_theorem_Akitx_Bergmann_abelian}
Assume that \(G\) is abelian and that \(\operatorname{Inv}\in \operatorname{Aut}G\) is the involution given by inversion. Then the following diagram commutes. In particular, \(f_{\mathrm{A}}\) and \(f_{\mathrm{B}}\) can be identified.
\[
 \xymatrix@!C{
 \operatorname{Conj}(G,\operatorname{Inv})\ar[r]^-{f_{\operatorname{A}}}\ar[d]^-{\operatorname{id}_G} &{\operatorname{Conj}(G\rtimes_{\operatorname{Inv}}\mathbb{Z}/2\mathbb{Z})} \ar[d]^-{\iota_G}\\
 \operatorname{Core}G \ar[r]^-{f_{\operatorname{B}}} & {\operatorname{Conj}((G\times G)\rtimes_{\operatorname{Sw}}\mathbb{Z}^{\times})}
}
.\]
\end{proposition}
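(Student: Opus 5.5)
The diagram will be checked to commute by direct elementwise computation. Both \(\operatorname{id}_G\) and \(\iota_G\) are already available as maps: \(\iota_G\) is the injective group homomorphism of the preceding proposition. Hence \(\iota_G\) is also an injective quandle homomorphism \(\operatorname{Conj}(G\rtimes_{\operatorname{Inv}}\mathbb{Z}/2\mathbb{Z})\to\operatorname{Conj}((G\times G)\rtimes_{\operatorname{Sw}}\mathbb{Z}^{\times})\), because any group homomorphism preserves conjugation \(g\triangleright h=h^{-1}gh\). By Proposition~\ref{prop_abelian_coincidence_Core_ConjAkita}, \(\operatorname{id}_G:\operatorname{Conj}(G,\operatorname{Inv})\to\operatorname{Core}G\) is a quandle isomorphism, since the two quandle operations coincide when \(G\) is abelian.

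Here \(f_{\mathrm{A}}\) means the composite \(\operatorname{inj}\circ\operatorname{im}f_{\mathrm{A}}\), that is, \(\operatorname{id}\times\operatorname{proj}\) applied to \(f_{\mathrm{A}}\). It therefore sends \(g\in G\) to \((g,1+2\mathbb{Z})\). Before using this, I will briefly check that \(\operatorname{id}\times\operatorname{proj}:G\rtimes_{\operatorname{Inv}}\mathbb{Z}\to G\rtimes_{\operatorname{Inv}}\mathbb{Z}/2\mathbb{Z}\) is a group homomorphism, which holds because \(\operatorname{Inv}^n\) depends only on the parity of \(n\). Then for every \(g\in G\),
\[
\iota_G\bigl(f_{\mathrm{A}}(g)\bigr)=\iota_G(g,1+2\mathbb{Z})=(g,g^{-1},\xi(1))=(g,g^{-1},-1)=f_{\mathrm{B}}(g)=f_{\mathrm{B}}\bigl(\operatorname{id}_G(g)\bigr),
\]
which is exactly commutativity of the square.

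For the claim that \(f_{\mathrm{A}}\) and \(f_{\mathrm{B}}\) can be identified, note that the vertical maps are injective quandle homomorphisms and the left one is a bijection. So \(f_{\mathrm{B}}\) factors as \(\iota_G\circ f_{\mathrm{A}}\circ\operatorname{id}_G^{-1}\), and \(\iota_G\) identifies \(\operatorname{Conj}(G\rtimes_{\operatorname{Inv}}\mathbb{Z}/2\mathbb{Z})\) with a subquandle of the target of \(f_{\mathrm{B}}\) in a way that carries the image of \(f_{\mathrm{A}}\) onto the image of \(f_{\mathrm{B}}\). The main point needing care, though hardly an obstacle, is confirming that the modified \(f_{\mathrm{A}}\) with target \(G\rtimes_{\operatorname{Inv}}\mathbb{Z}/2\mathbb{Z}\) is still injective and a quandle homomorphism. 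Injectivity is clear from the first coordinate. The homomorphism property follows because \(f_{\mathrm{A}}\) is a quandle homomorphism and \(\operatorname{id}\times\operatorname{proj}\) is a group homomorphism, so their composite respects conjugation.
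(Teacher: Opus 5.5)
Your proposal is correct and matches the paper's proof. The core step is the same elementwise computation,
$\iota_G(f_{\mathrm{A}}(g))=\iota_G(g,1+2\mathbb{Z})=(g,g^{-1},\xi(1))=(g,g^{-1},-1)=f_{\mathrm{B}}(g)$.
Your additional checks — that $\operatorname{id}\times\operatorname{proj}$ is a group homomorphism, that the reduced $f_{\mathrm{A}}$ is still an injective quandle homomorphism, and that $\iota_G$ preserves conjugation — are all valid and spell out what the paper leaves implicit.
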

\begin{proof}
Let \(g\in G\). Then
\[
\iota_G\circ f_{\mathrm{A}}(g)
=\iota_G(g,1)
=(g,g^{-1},\xi(1))
=(g,g^{-1},-1)
=f_{\mathrm{B}}(g).
\]
\end{proof}

\section{Results on the \(1\)-spherical quandle \(S^{1}_{\mathbb{R}}\)}
\label{section_SUZUKI_result_S1}
In this section, we compare the embedding \(\iota_{1}:S^{1}_{\mathbb{R}}\to\operatorname{Conj}O(2)\), constructed in Section~\ref{section_In-the-case-of-n=1}, with  the embeddings of Bergman's and Akita's.

We consider the abelian Lie group \(G=SO(2)\) and an automorphism \(\operatorname{Inv}\in \operatorname{Aut}(SO(2))\). Throughout, \(\operatorname{Inv}:SO(2)\to SO(2)\) denotes the inversion automorphism; that is, for each \(\theta\in\mathbb{R}\),
\[
\operatorname{Inv}\!\begin{pmatrix}
\cos\theta & -\sin\theta\\
\sin\theta & \cos\theta
\end{pmatrix}
:=
\begin{pmatrix}
\cos\theta & -\sin\theta\\
\sin\theta & \cos\theta
\end{pmatrix}^{-1}
=
\begin{pmatrix}
\cos\theta & \sin\theta\\
-\sin\theta & \cos\theta
\end{pmatrix}.
\]
Note that \(\operatorname{Core}(SO(2))\) and \(\operatorname{Conj}(SO(2),\operatorname{Inv})\) coincide by Proposition~\ref{prop_abelian_coincidence_Core_ConjAkita}.

\begin{lemma}
Define a map \(\mathcal{I}_{1}:S^{1}_{\mathbb{R}}\to \operatorname{Core}(SO(2))\) by
\[
\mathcal{I}_{1}(\cos\theta,\sin\theta)
:=
\begin{pmatrix}
\cos\theta & -\sin\theta\\
\sin\theta & \cos\theta
\end{pmatrix},
\quad(\theta\in\mathbb{R}).
\]
Then \(\mathcal{I}_{1}\) is a smooth quandle isomorphism.
\end{lemma}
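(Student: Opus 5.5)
The plan is to check three things in turn: that \(\mathcal{I}_{1}\) is well defined and bijective, that it is smooth with a smooth inverse, and that it respects the quandle operations.

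\textbf{Well-definedness and bijectivity.} Every point of \(S^{1}\) can be written as \((\cos\theta,\sin\theta)\). The matrix on the right-hand side depends only on \((\cos\theta,\sin\theta)\) and not on the choice of \(\theta\), so \(\mathcal{I}_{1}\) is well defined. Injectivity is immediate, since the point can be read off from the first column of the matrix. Every element of \(SO(2)\) has the rotation form, so \(\mathcal{I}_{1}\) is surjective. Note that \(\mathcal{I}_{1}\) is exactly the diffeomorphism \(S^{1}\to SO(2)\) already used in Section~\ref{section_In-the-case-of-n=1} in the factorization of \(\iota_{1}\).

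\textbf{Smoothness.} In coordinates, \(\mathcal{I}_{1}(x,y)=\begin{pmatrix} x & -y\\ y & x\end{pmatrix}\). This is the restriction of a linear map \(\mathbb{R}^{2}\to M(2,\mathbb{R})\), hence smooth. Its inverse is the first-column map \(SO(2)\to S^{1}\), which is also the restriction of a linear map, hence smooth. So \(\mathcal{I}_{1}\) is a diffeomorphism.

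\textbf{Homomorphism property.} Here I must fix the core operation, and this is the main point requiring care. For \(\operatorname{Core}(G)\) I take \(g\triangleright h=hg^{-1}h\). This agrees with \(\operatorname{Conj}(G,\operatorname{Inv})\), where \(g\triangleright h=\operatorname{Inv}(h^{-1}g)h=g^{-1}h^{2}\) in the abelian case, consistent with Proposition~\ref{prop_abelian_coincidence_Core_ConjAkita}. Write \(R_{\theta}\) for rotation by \(\theta\). Then \(R_{\theta_{1}}\triangleright R_{\theta_{2}}=R_{\theta_{2}}R_{-\theta_{1}}R_{\theta_{2}}=R_{2\theta_{2}-\theta_{1}}\). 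On the sphere side, the paper's computation in Section~\ref{section_In-the-case-of-n=1} gives \((\cos\theta_{1},\sin\theta_{1})\triangleright(\cos\theta_{2},\sin\theta_{2})=(\cos(2\theta_{2}-\theta_{1}),\sin(2\theta_{2}-\theta_{1}))\). Applying \(\mathcal{I}_{1}\) to this point gives \(R_{2\theta_{2}-\theta_{1}}\), so \(\mathcal{I}_{1}(\bm{x}\triangleright\bm{y})=\mathcal{I}_{1}(\bm{x})\triangleright\mathcal{I}_{1}(\bm{y})\).

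A bijective homomorphism is an isomorphism, since its inverse is automatically a homomorphism. Combined with the diffeomorphism property, \(\mathcal{I}_{1}\) is a smooth quandle isomorphism.
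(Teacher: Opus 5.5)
Your proposal is correct and takes essentially the same approach as the paper. The paper also treats the diffeomorphism property as straightforward and then checks directly that both \(\mathcal{I}_{1}(\bm{x})\triangleright\mathcal{I}_{1}(\bm{y})\) and \(\mathcal{I}_{1}(\bm{x}\triangleright\bm{y})\) equal the rotation by \(2\theta_{2}-\theta_{1}\); your explicit smooth inverse and your choice of core operation \(g\triangleright h=hg^{-1}h\) (which the paper leaves implicit, and which agrees with \(\operatorname{Conj}(SO(2),\operatorname{Inv})\)) just fill in details it omits.
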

\begin{proof}
It is straightforward to check that \(\mathcal{I}_{1}\) is a diffeomorphism. Thus it suffices to show that it is a quandle homomorphism. Let \(\alpha,\beta\in\mathbb{R}\). Then
\begin{eqnarray*}
\mathcal{I}_{1}
(\cos{\alpha}, \sin{\alpha})\triangleright\mathcal{I}_{1}(\cos{\beta} , \sin{\beta})
&=&
\begin{pmatrix}
\cos{(2\beta-\alpha)} & -\sin{(2\beta-\alpha)}\\
\sin{(2\beta-\alpha)} & \cos{(2\beta-\alpha)}
\end{pmatrix}\\
&=&
\mathcal{I}_{1}
((\cos{\alpha}, \sin{\alpha})\triangleright(\cos{\beta} , \sin{\beta})).
\end{eqnarray*}
\end{proof}

Since the short exact sequence
\[
1\to
SO(2)\to
O(2)\xrightarrow{\operatorname{det}}
\mathbb{Z}^{\times}
\to1
\]
splits, we obtain an isomorphism.
\begin{proposition}
One can define a map \(\gamma:O(2)\to SO(2)\rtimes_{\operatorname{Inv}}\mathbb{Z}/2\mathbb{Z}\) by
\[
\gamma(g)
:=
(
\operatorname{Inv}^{\frac{1-\operatorname{det}g}{2}}
(g\begin{pmatrix}
    1 & 0\\
    0 & -1
\end{pmatrix}^{\frac{1-\operatorname{det}g}{2}})
, \frac{1-\operatorname{det}g}{2}+2\mathbb{Z}
)
\quad
(g\in O(2))
\]
and this is well defined. Moreover, \(\gamma\) is a smooth group isomorphism.
\end{proposition}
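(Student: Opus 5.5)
Let $r:=\begin{pmatrix}1&0\\0&-1\end{pmatrix}=h_1$ and, for $g\in O(2)$, set $\epsilon(g):=\frac{1-\det g}{2}\in\{0,1\}$, so that
\[
\gamma(g)=\bigl(\operatorname{Inv}^{\epsilon(g)}(g r^{\epsilon(g)}),\ \epsilon(g)+2\mathbb{Z}\bigr).
\]
The plan has four steps: check well-definedness, prove the homomorphism property by splitting into the four cases for $(\epsilon(g),\epsilon(h))$, exhibit an explicit inverse, and verify smoothness componentwise.

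\textbf{Well-definedness.} Since $\det g=\pm1$, the exponent $\epsilon(g)$ is an honest integer. Moreover $\det(g r^{\epsilon(g)})=\det g\cdot(-1)^{\epsilon(g)}=1$. Hence $g r^{\epsilon(g)}\in SO(2)$, $\operatorname{Inv}$ may be applied to it, and $\gamma(g)$ lies in $SO(2)\times\mathbb{Z}/2\mathbb{Z}$.

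\textbf{Homomorphism property.} Note that $\epsilon$ is additive modulo $2$, because $\det$ is multiplicative. The key identity is $rar=a^{-1}=\operatorname{Inv}(a)$ for $a\in SO(2)$, which is a direct matrix computation. Write $g=a r^{\epsilon(g)}$ and $h=b r^{\epsilon(h)}$ with $a,b\in SO(2)$. Then
\[
gh=a\,r^{\epsilon(g)}\,b\,r^{\epsilon(h)}=a\,\operatorname{Inv}^{\epsilon(g)}(b)\,r^{\epsilon(g)+\epsilon(h)}.
\]
The first component of $\gamma(g)\gamma(h)$ is $\operatorname{Inv}^{\epsilon(h)}\bigl(\operatorname{Inv}^{\epsilon(g)}(a)\bigr)\,\operatorname{Inv}^{\epsilon(h)}(b)$. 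One then compares this with $\operatorname{Inv}^{\epsilon(gh)}(gh\,r^{\epsilon(gh)})$ in each of the four cases. The reduction $r^{2}=I$ and the commutativity of $SO(2)$ make each comparison a one-line check.

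\textbf{Main obstacle.} This is the case $\epsilon(g)=\epsilon(h)=1$. Here $r^{2}=I$, so $gh\,r^{0}=a\,\operatorname{Inv}(b)=ab^{-1}$, whereas the product side gives $\operatorname{Inv}(\operatorname{Inv}(a))\,\operatorname{Inv}(b)=ab^{-1}$. These agree. The case $(1,0)$ gives $\operatorname{Inv}(a b)$ on both sides, and $(0,1)$ gives $\operatorname{Inv}(a)\operatorname{Inv}(b)$ against $\operatorname{Inv}(ab)$, which agree because $SO(2)$ is abelian. If the convention in the definition of $\gamma$ introduces a mismatch, it will be absorbed using commutativity.

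\textbf{Bijectivity and smoothness.} The inverse is the map
\[
(a,m+2\mathbb{Z})\longmapsto \operatorname{Inv}^{m}(a)\,r^{m},
\]
which is well defined because $\operatorname{Inv}$ and $r$ both have order $2$. For smoothness, recall that $\det$ is locally constant on $O(2)$. On each of the two components $SO(2)$ and $rSO(2)$, the map $\gamma$ is therefore a composition of matrix multiplication and inversion, hence smooth. The inverse is smooth for the same reason. Thus $\gamma$ is a smooth group isomorphism.
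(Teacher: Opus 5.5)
Your proof is correct and follows the paper's route: the same explicit inverse (your $\operatorname{Inv}^{m}(a)\,r^{m}$ equals the paper's $r^{m}a$, since $rar=a^{-1}$) and the same four-case check of the homomorphism property, which you streamline by writing $g=a\,r^{\epsilon(g)}$ and using $rb=\operatorname{Inv}(b)\,r$. One small slip: in the case $(\epsilon(g),\epsilon(h))=(1,0)$ both sides equal $\operatorname{Inv}(a)\,b=a^{-1}b$, not $\operatorname{Inv}(ab)$; they still agree, so the argument goes through.
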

\begin{proof}
Since every element of \(O(2)\) has determinant \(1\) or \(-1\), the map \(\gamma\) is well defined.
Moreover, \(\gamma\) admits a smooth inverse map
\[
\delta(g,a+2\mathbb{Z})
:=
\begin{pmatrix}
1 & 0\\
0 & -1
\end{pmatrix}^{a}g,
\qquad (g\in SO(2),\ a\in\mathbb{Z}),
\]
and hence \(\gamma\) is a diffeomorphism.

It therefore suffices to show that \(\gamma\) is a group homomorphism. Let \(g,h\in O(2)\) and \(J:=\operatorname{diag}(1,-1)\).
We distinguish four cases according to the values of \(\det g\) and \(\det h\).

\noindent\textbf{Case 1: \(\det g=1\) and \(\det h=1\).}
Then
\[
\gamma(g)\gamma(h)
=(g,0+2\mathbb{Z})(h,0+2\mathbb{Z})
=(gh,0+2\mathbb{Z}).
\]
Since \(\det(gh)=1\), we have \(\gamma(gh)=(gh,0+2\mathbb{Z})\), and hence \(\gamma(g)\gamma(h)=\gamma(gh)\).

\noindent\textbf{Case 2: \(\det g=1\) and \(\det h=-1\).}
Using that \(SO(2)\) is abelian, we compute
\begin{align*}
\gamma(g)\gamma(h)
&=(g,0+2\mathbb{Z})\bigl(\operatorname{Inv}(hJ),1+2\mathbb{Z}\bigr)\\
&=(g,0+2\mathbb{Z})\bigl(Jh^{-1},1+2\mathbb{Z}\bigr)\\
&=\bigl(\operatorname{Inv}(g)\,Jh^{-1},1+2\mathbb{Z}\bigr)\\
&=\bigl(g^{-1}Jh^{-1},1+2\mathbb{Z}\bigr)
=\bigl(Jh^{-1}g^{-1},1+2\mathbb{Z}\bigr).
\end{align*}
Since \(\det(gh)=-1\), we have
\[
\gamma(gh)=\bigl(\operatorname{Inv}(ghJ),1+2\mathbb{Z}\bigr)=\bigl(Jh^{-1}g^{-1},1+2\mathbb{Z}\bigr),
\]
and hence \(\gamma(g)\gamma(h)=\gamma(gh)\).

\noindent\textbf{Case 3: \(\det g=-1\) and \(\det h=1\).}
Using that \(SO(2)\) is abelian and that \(x^{-1}=JxJ\) for \(x\in SO(2)\), we obtain
\begin{align*}
\gamma(g)\gamma(h)
&=\bigl(\operatorname{Inv}(gJ),1+2\mathbb{Z}\bigr)(h,0+2\mathbb{Z})\\
&=\bigl(Jg^{-1},1+2\mathbb{Z}\bigr)(h,0+2\mathbb{Z})\\
&=\bigl(Jg^{-1}h,1+2\mathbb{Z}\bigr)
=\bigl(hJg^{-1},1+2\mathbb{Z}\bigr)
=\bigl(Jh^{-1}g^{-1},1+2\mathbb{Z}\bigr).
\end{align*}
Since \(\det(gh)=-1\), the same computation as in Case~2 yields
\[
\gamma(gh)=\bigl(Jh^{-1}g^{-1},1+2\mathbb{Z}\bigr),
\]
and hence \(\gamma(g)\gamma(h)=\gamma(gh)\).

\noindent\textbf{Case 4: \(\det g=-1\) and \(\det h=-1\).}
Since every element of \(O(2)\setminus SO(2)\) has order \(2\), we have \(h^{-1}=h\). Thus
\begin{align*}
\gamma(g)\gamma(h)
&=\bigl(\operatorname{Inv}(gJ),1+2\mathbb{Z}\bigr)\bigl(\operatorname{Inv}(hJ),1+2\mathbb{Z}\bigr)\\
&=\bigl(\operatorname{Inv}^{2}(gJ)\,\operatorname{Inv}(hJ),0+2\mathbb{Z}\bigr)\\
&=\bigl(gJ\cdot Jh^{-1},0+2\mathbb{Z}\bigr)
=\bigl(gh^{-1},0+2\mathbb{Z}\bigr)
=\bigl(gh,0+2\mathbb{Z}\bigr).
\end{align*}
Since \(\det(gh)=1\), we have \(\gamma(gh)=(gh,0+2\mathbb{Z})\), and hence \(\gamma(g)\gamma(h)=\gamma(gh)\).

Therefore \(\gamma\) is a group homomorphism, and hence a smooth group isomorphism.
\end{proof}

We describe the relationship among Bergman's embedding, Akita's embedding, and the embedding \(\iota_{1}\) constructed in Section~\ref{section_In-the-case-of-n=1}.
\begin{theorem}
\label{theorem_ASuzuki_S1}
Let \(G=SO(2)\), and let \(\operatorname{Inv}\in \operatorname{Aut}G\) be the automorphism given by inversion. Then the following diagram commutes. In particular, \(\iota_{1}\), \(f_{\mathrm{A}}\), and \(f_{\mathrm{B}}\) can be identified.
\[
 \xymatrix@!C{
 S^{1}_{\mathbb{R}}\ar[r]^-{\iota_{1}}\ar[d]^-{\mathcal{I}_{1}} & \operatorname{Conj}(O(2)) \ar[d]^-{\gamma}\\
 \operatorname{Conj}(G,\operatorname{Inv})\ar[r]^-{f_{\operatorname{A}}}\ar[d]^-{\operatorname{id}_G} &{\operatorname{Conj}(G\rtimes_{\operatorname{Inv}}\mathbb{Z}/2\mathbb{Z})} \ar[d]^-{\iota_G}\\
 \operatorname{Core}G \ar[r]^-{f_{\operatorname{B}}} & {\operatorname{Conj}((G\times G)\rtimes_{\operatorname{Sw}}\mathbb{Z}^{\times})}
}
\]
\end{theorem}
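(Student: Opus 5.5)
The diagram splits into two squares, and I would treat them separately. The lower square is exactly the diagram of Proposition~\ref{prop_Suzuki_theorem_Akitx_Bergmann_abelian} applied to the abelian group \(G=SO(2)\). So it commutes by that proposition, and nothing new is needed there. The whole task is the upper square: I have to show \(\gamma\circ\iota_{1}=f_{\mathrm{A}}\circ\mathcal{I}_{1}\). Here \(f_{\mathrm{A}}\) is read as landing in \(\operatorname{Conj}(G\rtimes_{\operatorname{Inv}}\mathbb{Z}/2\mathbb{Z})\) via the identification \(\operatorname{inj}\circ\operatorname{im}f_{\mathrm{A}}=f_{\mathrm{A}}\) made before Proposition~\ref{prop_Suzuki_theorem_Akitx_Bergmann_abelian}. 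With that reading, \(f_{\mathrm{A}}(g)=(g,1+2\mathbb{Z})\).

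To verify the upper square, I write \(R_\theta\) for the rotation matrix \(\mathcal{I}_1(\cos\theta,\sin\theta)\) and \(J=\operatorname{diag}(1,-1)=h_1\). From the description in Section~\ref{section_In-the-case-of-n=1}, we have \(\iota_1(\cos\theta,\sin\theta)=JR_\theta\), and this matrix has determinant \(-1\). Hence the exponent \(\frac{1-\det g}{2}\) in the definition of \(\gamma\) equals \(1\), and
\[
\gamma(JR_\theta)=\bigl(\operatorname{Inv}(JR_\theta J),\,1+2\mathbb{Z}\bigr).
\]
Next I use the identity \(JxJ=x^{-1}\) for \(x\in SO(2)\), already used in the proof that \(\gamma\) is an isomorphism. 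It gives \(JR_\theta J=R_{-\theta}\). Therefore \(\operatorname{Inv}(R_{-\theta})=R_\theta\), and so \(\gamma(\iota_1(\cos\theta,\sin\theta))=(R_\theta,1+2\mathbb{Z})=f_{\mathrm{A}}(\mathcal{I}_1(\cos\theta,\sin\theta))\).

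Finally, to justify ``can be identified'', I note that every vertical map is an isomorphism of the relevant kind, or at least injective. The map \(\mathcal{I}_1\) is a smooth quandle isomorphism by the preceding lemma. The map \(\gamma\) is a smooth group isomorphism, so it induces an isomorphism of conjugation quandles. The map \(\operatorname{id}_G\) is a quandle isomorphism by Proposition~\ref{prop_abelian_coincidence_Core_ConjAkita}. The map \(\iota_G\) is an injective group homomorphism, so it induces an injective homomorphism of conjugation quandles. Thus the three horizontal embeddings agree up to these vertical maps.

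The only real obstacle is bookkeeping, namely keeping the conventions straight. One must use the correct sign of the exponent in \(\gamma\) and the correct placement of \(J\) (\(\iota_1=h_1\cdot\) rotation, with multiplication on the left). One must also interpret \(f_{\mathrm{A}}\) modulo \(2\) as stated. If \(J\) were placed on the other side, the computation would instead give \(R_{-\theta}\), so this is the step I would check most carefully.
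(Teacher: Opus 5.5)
Your proposal is correct and follows the paper's proof. The lower square is handled by Proposition~\ref{prop_Suzuki_theorem_Akitx_Bergmann_abelian}, and the upper square is a direct check that $\gamma(JR_\theta)=(\operatorname{Inv}(JR_\theta J),1+2\mathbb{Z})=(R_\theta,1+2\mathbb{Z})=f_{\mathrm{A}}(\mathcal{I}_1(\cos\theta,\sin\theta))$; the paper states this result without the intermediate step $JR_\theta J=R_{-\theta}$, which you supply.
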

\begin{proof}
By Proposition~\ref{prop_Suzuki_theorem_Akitx_Bergmann_abelian}, it suffices to show that \(\gamma\circ \iota_{1}=f_{\mathrm{A}}\circ \mathcal{I}_{1}\). For each \(\theta\in\mathbb{R}\), we have
\[
\gamma\circ \iota_{1}(\cos\theta,\sin\theta)
=
\left(
\begin{pmatrix}
\cos\theta & -\sin\theta\\
\sin\theta & \cos\theta
\end{pmatrix},
1
\right)
=
f_{\mathrm{A}}\circ \mathcal{I}_{1}(\cos\theta,\sin\theta).
\]
\end{proof}

\section{Results on the \(3\)-spherical quandle \(S^{3}_{\mathbb{R}}\)}
\label{section_SUZUKI_result_S3}

In this section, we compare the embedding \(\iota_{3}:S^{3}_{\mathbb{R}}\to\operatorname{Conj}Pin^{+}(4)\), constructed in Section~\ref{subsection_proof_main_theorem}, with Bergman's embedding.

Throughout this section, we write the imaginary unit as \(i=\sqrt{-1}\). We consider the simply connected real Lie group \(SU(2)\), defined by
\[
SU(2)
=
\left\{
\begin{pmatrix}
x_{1}+x_{2}i & x_{3}+x_{4}i\\
-x_{3}+x_{4}i & x_{1}-x_{2}i
\end{pmatrix}
\ :\ 
(x_1,x_2,x_3,x_4)\in S^3
\right\}.
\]
As a topological space, the Lie group \(SU(2)\) is homeomorphic to the \(3\)-sphere \(S^{3}\). It is also related to the \(3\)-dimensional spherical quandle as follows.

\begin{proposition}
Define a map \(\mathcal{I}_{2}:S^{3}_{\mathbb{R}}\to \operatorname{Core}(SU(2))\) by
\[
\mathcal{I}_{2}(x_{1},x_{2},x_{3},x_{4})
:=
\begin{pmatrix}
x_{1}+x_{2}i & x_{3}+x_{4}i\\
-x_{3}+x_{4}i & x_{1}-x_{2}i
\end{pmatrix},
\quad ((x_{1},x_{2},x_{3},x_{4})\in S^{3}).
\]
Then \(\mathcal{I}_{2}\) is a quandle isomorphism.
\end{proposition}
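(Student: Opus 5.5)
Core quandle: the operation on Core(G) is presumably $g\triangleright h = hg^{-1}h$ (Bergman's convention; consistent with the $S^1$ case, where $2\beta-\alpha$ appeared). So we need $\mathcal{I}_2(2\langle x,y\rangle y - x) = \mathcal{I}_2(y)\mathcal{I}_2(x)^{-1}\mathcal{I}_2(y)$.

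We first observe that $\mathcal{I}_{2}$ is the restriction to $S^{3}$ of the $\mathbb{R}$-linear isomorphism $\Phi:\mathbb{R}^{4}\to\mathbb{H}$, where $\mathbb{H}=\{\begin{pmatrix} a & b\\ -\bar b & \bar a\end{pmatrix}\}\subset M(2,\mathbb{C})$ is the real algebra of quaternions. This $\Phi$ maps $S^{3}$ bijectively onto $SU(2)$, since a matrix of this form lies in $SU(2)$ exactly when $|a|^{2}+|b|^{2}=1$, which is the defining description of $SU(2)$ given above. Hence $\mathcal{I}_{2}$ is a bijection (indeed a diffeomorphism), and it remains only to verify the homomorphism property with respect to the core operation $g\triangleright h=hg^{-1}h$. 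This is the same convention that makes the $S^{1}$ case work, since there the computation produced $2\beta-\alpha$.

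The key algebraic facts are the following. For $X=\Phi(\bm{x})$ we have $X^{*}X=\langle\bm{x},\bm{x}\rangle I$, and more generally, by polarization,
\[
X^{*}Y+Y^{*}X=2\langle\bm{x},\bm{y}\rangle I .
\]
One checks this directly: $X^{*}X=(|a|^{2}+|b|^{2})I$ is quadratic in $\bm{x}$, so the polarization identity applies. For $\bm{x}\in S^{3}$ this gives $X^{-1}=X^{*}$. With $Y=\Phi(\bm{y})$ we then compute
\[
Y X^{-1} Y = Y X^{*} Y = Y\bigl(2\langle\bm{x},\bm{y}\rangle I - Y^{*}X\bigr) = 2\langle\bm{x},\bm{y}\rangle Y - X,
\]
using $YY^{*}=I$. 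By linearity of $\Phi$, the right-hand side equals $\Phi(2\langle\bm{x},\bm{y}\rangle\bm{y}-\bm{x})=\mathcal{I}_{2}(\bm{x}\triangleright\bm{y})$. Hence $\mathcal{I}_{2}(\bm{x})\triangleright\mathcal{I}_{2}(\bm{y})=\mathcal{I}_{2}(\bm{x}\triangleright\bm{y})$.

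The only point requiring care is the polarization identity $X^{*}Y+Y^{*}X=2\langle\bm{x},\bm{y}\rangle I$. I would verify it by the quadratic-form argument: expand $(X+Y)^{*}(X+Y)$ and subtract $X^{*}X$ and $Y^{*}Y$. Alternatively, one can note that the off-diagonal entries of $X^{*}Y+Y^{*}X$ cancel, because $X^{*}Y$ is again of quaternionic form and the sum of a quaternion and its conjugate is real. A secondary point is to confirm the paper's orientation convention for $\operatorname{Core}$, namely whether the operation is $hg^{-1}h$ or $h^{-1}gh^{-1}$. If the intended convention were $h^{-1}gh^{-1}$, the map $\mathcal{I}_{2}$ would not be a homomorphism (take $\bm{x}=\bm{y}$ to see the axiom fails unless $Y^{-2}=\ldots$). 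I therefore adopt $hg^{-1}h$, consistent with the $S^{1}$ lemma.
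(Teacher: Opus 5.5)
Your proof is correct, but it takes a different route from the paper. The paper checks the homomorphism property by brute force. It multiplies out $\mathcal{I}_{2}(\bm{y})\mathcal{I}_{2}(\bm{x})^{-1}\mathcal{I}_{2}(\bm{y})$ in coordinates and observes that the resulting entries $a,b,c,d$ agree with those of $\mathcal{I}_{2}(\bm{x}\triangleright\bm{y})$. These entries are exactly the four components of $2\langle\bm{x},\bm{y}\rangle\bm{y}-\bm{x}$, though the paper does not point this out.

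You instead extend $\mathcal{I}_{2}$ to the linear map $\Phi:\mathbb{R}^{4}\to\mathbb{H}$. You then use only the norm identity $X^{*}X=\langle\bm{x},\bm{x}\rangle I$ and its polarization $X^{*}Y+Y^{*}X=2\langle\bm{x},\bm{y}\rangle I$, after which $YX^{-1}Y=2\langle\bm{x},\bm{y}\rangle Y-X$ follows in one line.

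The paper's computation is self-contained and uses nothing beyond matrix multiplication, but it is opaque. Your argument explains why the identity holds: on the unit sphere of a normed division algebra, inversion is conjugation, so $y\bar{x}y$ is automatically the reflection formula. It also avoids all coordinate bookkeeping, and it works verbatim with $\mathbb{C}$ in place of $\mathbb{H}$, so it recovers the lemma for $\mathcal{I}_{1}$ as well.

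On the convention: the paper never writes out the operation of $\operatorname{Core}G$. Your choice $g\triangleright h=hg^{-1}h$ is the right one; it is the convention under which Bergman's map $f_{\mathrm{B}}$ is a homomorphism. A cleaner justification than the analogy with the $S^{1}$ case is that the alternative $h^{-1}gh^{-1}$ is not a quandle operation at all, since it gives $g\triangleright g=g^{-1}$.
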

\begin{proof}
Since it is straightforward to verify that \(\mathcal{I}_{2}\) is a diffeomorphism, it suffices to show that it is a quandle homomorphism. Let \((x_{1},x_{2},x_{3},x_{4}),(y_{1},y_{2},y_{3},y_{4})\in S^{3}\). A direct computation shows that both
\(\mathcal{I}_{2}(x_{1},x_{2},x_{3},x_{4})\triangleright \mathcal{I}_{2}(y_{1},y_{2},y_{3},y_{4})\) and
\(\mathcal{I}_{2}\bigl((x_{1},x_{2},x_{3},x_{4})\triangleright (y_{1},y_{2},y_{3},y_{4})\bigr)\)
are equal to
\[
\begin{pmatrix}
a+bi & c+di\\
-c+di & a-bi
\end{pmatrix},
\]
where
\begin{align*}
a&=(2y_{1}^{2}-1)x_{1}+2x_{2}y_{1}y_{2}+2x_{3}y_{1}y_{3}+2x_{4}y_{1}y_{4},\\
b&=2x_{1}y_{1}y_{2}+(2y_{2}^{2}-1)x_{2}+2x_{3}y_{2}y_{3}+2x_{4}y_{2}y_{4},\\
c&=2x_{1}y_{1}y_{3}+2x_{2}y_{2}y_{3}+(2y_{3}^{2}-1)x_{3}+2x_{4}y_{3}y_{4},\\
d&=2x_{1}y_{1}y_{4}+2x_{2}y_{2}y_{4}+2x_{3}y_{3}y_{4}+(2y_{4}^{2}-1)x_{4}.
\end{align*}
Therefore, \(\mathcal{I}_{2}\) is a quandle homomorphism.
\end{proof}

After some preparations, we state the main theorem of this section, Theorem~\ref{theorem_ASuzuki_S3}.

We give an explicit description of the universal covering \(p_{4}:Spin(4)\to SO(4)\). It is well known that \(Spin(4)\cong SU(2)\times SU(2)\); thus we identify \(Spin(4)\) with \(SU(2)\times SU(2)\). Define \(p_{4}:Spin(4)\to SO(4)\) as follows. For \(g,h\in SU(2)\), write
\[
g=
\begin{pmatrix}
x_{1}+x_{2}i & x_{3}+x_{4}i\\
-x_{3}+x_{4}i & x_{1}-x_{2}i
\end{pmatrix},
\qquad
h=
\begin{pmatrix}
y_{1}+y_{2}i & y_{3}+y_{4}i\\
-y_{3}+y_{4}i & y_{1}-y_{2}i
\end{pmatrix},
\]
and set \(p_{4}(g,h)\) to be the matrix
\[
\left(
\begin{smallmatrix}
 x_1 y_1 + x_2 y_2 + x_3 y_3 + x_4 y_4
 & x_1 y_2 - x_2 y_1 - x_3 y_4 + x_4 y_3
 & x_1 y_3 + x_2 y_4 - x_3 y_1 - x_4 y_2
 & x_1 y_4 - x_2 y_3 + x_3 y_2 - x_4 y_1
\\
 -x_1 y_2 + x_2 y_1 - x_3 y_4 + x_4 y_3
 & x_1 y_1 + x_2 y_2 - x_3 y_3 - x_4 y_4
 & -x_1 y_4 + x_2 y_3 + x_3 y_2 - x_4 y_1
 & x_1 y_3 + x_2 y_4 + x_3 y_1 + x_4 y_2
\\
 -x_1 y_3 + x_2 y_4 + x_3 y_1 - x_4 y_2
 & x_1 y_4 + x_2 y_3 + x_3 y_2 + x_4 y_1
 & x_1 y_1 - x_2 y_2 + x_3 y_3 - x_4 y_4
 & -x_1 y_2 - x_2 y_1 + x_3 y_4 + x_4 y_3
\\
 -x_1 y_4 - x_2 y_3 + x_3 y_2 + x_4 y_1
 & -x_1 y_3 + x_2 y_4 - x_3 y_1 + x_4 y_2
 & x_1 y_2 + x_2 y_1 + x_3 y_4 + x_4 y_3
 & x_1 y_1 - x_2 y_2 - x_3 y_3 + x_4 y_4
\end{smallmatrix}
\right).
\]
A direct computation shows that \(p_{4}\) is a group homomorphism. Moreover, writing \(I_{2}\) for the \(2\times 2\) identity matrix, we have
\[
\operatorname{Ker}p_{4}=\{(I_{2},I_{2}),\,(-I_{2},-I_{2})\}\cong \mathbb{Z}/2\mathbb{Z},
\]
and hence \(p_{4}\) is a double covering.

Let \(Pin^{+}(4)=Spin(4)\rtimes_{\operatorname{Sw}}\mathbb{Z}^{\times}\). Then the universal covering \(p:Pin^{+}(4)\to O(4)\) is given, using \(h_{3}=\operatorname{diag}(1,-1,-1,-1)\), by
\[
p(g,h,a+2\mathbb{Z})
=
\begin{cases}
p_{4}(g,h) & \text{if \(a\) is even},\\
p_{4}(g,h)\,h_{3} & \text{if \(a\) is odd}.
\end{cases}
\quad
(g,h\in SU(2),a\in\mathbb{Z})
\]
Define
\[
\tilde{H}_{3}:=(I_{2},I_{2},-1)\in (SU(2)\times SU(2))\rtimes_{\operatorname{Sw}}\mathbb{Z}^{\times}=Pin^{+}(4).
\]
Then \(\tilde{H}_{3}\in p^{-1}(\{h_{3}\})\). Therefore, by Remark~\ref{rem_concreat_presentation_iota_n}, the map \(\iota_{3}:S^{3}\to Pin^{+}(4)\) can be described explicitly as
\[
\bm{e}_{1}p_{4}(g,h)
\longmapsto
(g,h,1)^{-1}\tilde{H}_{3}(g,h,1)
=(h^{-1}g,\,g^{-1}h,\,-1),
\quad (g,h\in SU(2)).
\]

\begin{theorem}
\label{theorem_ASuzuki_S3}
The following diagram commutes. In particular, Bergman's embedding for \(SU(2)\) can be identified with \(\iota_{3}\).
\[
 \xymatrix@!C{
 S^{3}\ar[r]^-{\iota_{3}}\ar[d]^-{\operatorname{Inv}\circ\mathcal{I}_2} & Pin^{+}(4) \ar@{=}[d]\\
 SU(2) \ar[r]^-{f_{\operatorname{B}}} & (SU(2)\times SU(2))\rtimes_{\operatorname{Sw}}\mathbb{Z}^{\times}
}
\]
\end{theorem}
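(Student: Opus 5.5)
The plan is to check commutativity of the square pointwise, using the explicit formula for \(\iota_{3}\) obtained just before the statement via Remark~\ref{rem_concreat_presentation_iota_n}. Since \(p_{4}\colon Spin(4)\to SO(4)\) is surjective (it is a double covering onto the connected group \(SO(4)\)) and \(SO(4)\) acts transitively on \(S^{3}\), every point of \(S^{3}\) can be written as \(\bm{e}_{1}p_{4}(g,h)\) for some \(g,h\in SU(2)\). It therefore suffices to prove
\[
f_{\operatorname{B}}\bigl(\operatorname{Inv}\circ\mathcal{I}_{2}(\bm{e}_{1}p_{4}(g,h))\bigr)=\iota_{3}(\bm{e}_{1}p_{4}(g,h))=(h^{-1}g,\,g^{-1}h,\,-1)
\]
for all \(g,h\in SU(2)\). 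Well-definedness of \(\iota_3\) on such representatives is already guaranteed by the construction in Section~\ref{subsection_proof_main_theorem}, so no separate check is needed.

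Since \(f_{\operatorname{B}}(k)=(k,k^{-1},-1)\), the left-hand side equals \((\mathcal{I}_{2}(\bm{v})^{-1},\,\mathcal{I}_{2}(\bm{v}),\,-1)\) with \(\bm{v}=\bm{e}_{1}p_{4}(g,h)\). The whole statement therefore reduces to the single matrix identity
\[
\mathcal{I}_{2}\bigl(\bm{e}_{1}p_{4}(g,h)\bigr)=g^{-1}h .
\]
Taking inverses of both sides of this identity then also gives the first component \(h^{-1}g\).

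To verify the identity, I read off \(\bm{e}_{1}p_{4}(g,h)\) as the first row of the displayed matrix \(p_{4}(g,h)\):
\[
\bigl(x_1y_1+x_2y_2+x_3y_3+x_4y_4,\; x_1y_2-x_2y_1-x_3y_4+x_4y_3,\; x_1y_3+x_2y_4-x_3y_1-x_4y_2,\; x_1y_4-x_2y_3+x_3y_2-x_4y_1\bigr).
\]
Next I compute \(g^{-1}h={}^{t}\bar{g}\,h\), using \(g\in SU(2)\), where
\[
{}^{t}\bar g=\begin{pmatrix}x_1-x_2i & -x_3-x_4i\\ x_3-x_4i & x_1+x_2i\end{pmatrix}.
\]
The \((1,1)\) entry is
\[
(x_1y_1+x_2y_2+x_3y_3+x_4y_4)+i\,(x_1y_2-x_2y_1-x_3y_4+x_4y_3).
\]
The \((1,2)\) entry is
\[
(x_1y_3+x_2y_4-x_3y_1-x_4y_2)+i\,(x_1y_4-x_2y_3+x_3y_2-x_4y_1).
\]
These are exactly the entries \(a+bi\) and \(c+di\) prescribed by \(\mathcal{I}_{2}\) applied to the row above. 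Since a matrix in \(SU(2)\) is determined by its first row, the identity follows, and hence so does the commutativity of the diagram.

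The only step requiring care is this entrywise comparison, in particular getting the signs of the cross terms right. It is the main, though routine, obstacle. I would double-check it by testing special cases: \(g=I_2\), where the identity reads \(\mathcal{I}_2(\bm{e}_1p_4(I_2,h))=h\), and \(h=I_2\), where it reads \(\mathcal{I}_2(\bm{e}_1p_4(g,I_2))=g^{-1}\).
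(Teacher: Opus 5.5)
Your proposal is correct and follows the paper's proof essentially verbatim: write each point as $\bm{e}_{1}p_{4}(g,h)$, reduce to the identity $\mathcal{I}_{2}(\bm{e}_{1}p_{4}(g,h))=g^{-1}h$, and apply $f_{\operatorname{B}}\circ\operatorname{Inv}$ to get $(h^{-1}g,\,g^{-1}h,\,-1)=\iota_{3}(\bm{e}_{1}p_{4}(g,h))$. Your entrywise computation of $g^{-1}h={}^{t}\bar{g}\,h$, which the paper states without detail, checks out, including the signs of the cross terms.
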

\begin{proof}
Every element of \(S^{3}\) can be written as \(\bm{e}_{1}p_{4}(g,h)\) for suitable \(g,h\in SU(2)\). Write \((x_{1},x_{2},x_{3},x_{4}),(y_{1},y_{2},y_{3},y_{4})\in S^{3}\) and set
\[
g=
\begin{pmatrix}
x_{1}+x_{2}i & x_{3}+x_{4}i\\
-x_{3}+x_{4}i & x_{1}-x_{2}i
\end{pmatrix},
\qquad
h=
\begin{pmatrix}
y_{1}+y_{2}i & y_{3}+y_{4}i\\
-y_{3}+y_{4}i & y_{1}-y_{2}i
\end{pmatrix}.
\]
Let
\begin{align*}
z_{1}&=x_1 y_1 + x_2 y_2 + x_3 y_3 + x_4 y_4,\\
z_{2}&=x_1 y_2 - x_2 y_1 - x_3 y_4 + x_4 y_3,\\
z_{3}&=x_1 y_3 + x_2 y_4 - x_3 y_1 - x_4 y_2,\\
z_{4}&=x_1 y_4 - x_2 y_3 + x_3 y_2 - x_4 y_1.
\end{align*}
Then
\begin{equation*}
\mathcal{I}_{2}(\bm{e}_{1}p_{4}(g,h))
=\mathcal{I}_{2}(z_{1},z_{2},z_{3},z_{4})
=
\begin{pmatrix}
z_{1}+z_{2}i & z_{3}+z_{4}i\\
-z_{3}+z_{4}i & z_{1}-z_{2}i
\end{pmatrix}
= g^{-1}h.
\end{equation*}
Therefore,
\[
f_{\mathrm{B}}\circ \operatorname{Inv}\circ \mathcal{I}_{2}\bigl(\bm{e}_{1}p_{4}(g,h)\bigr)
=(h^{-1}g,\,g^{-1}h,\,-1)
=\iota_{3}\bigl(\bm{e}_{1}p_{4}(g,h)\bigr),
\]
and hence the diagram commutes.
\end{proof}

It is natural to ask whether analogues of Theorem~\ref{theorem_ASuzuki_S1} and Theorem~\ref{theorem_ASuzuki_S3} can be obtained in dimensions other than \(1\) and \(3\). More precisely, one may ask whether the smooth embedding constructed in Proposition~\ref{prop_iota_n_construction} agrees with Bergman's embedding. Unfortunately, this seems unlikely in other dimensions, since it is known that spheres admit no Lie group structure except in low dimensions: every Lie group is parallelizable. Moreover, Kervaire \cite{Kervaire1958non} and Bott--Milnor \cite{Bott_Milnor1958} showed that the only parallelizable spheres are \(S^{1}\), \(S^{3}\), and \(S^{7}\). In particular, it is known that \(S^{7}\) carries no Lie group structure. See also \cite[Chapter~V, \S12]{Bredon2013topology}.

We also intended to compare our embedding \(\iota_{3}\) with Akita's, in analogy with Theorem~\ref{theorem_ASuzuki_S1}. However, we do not even know whether \(S^{3}_{\mathbb{R}}\) is isomorphic, as a quandle, to any twisted conjugation quandle on a group. Indeed, when \(G=SU(2)\), the automorphism group \(\operatorname{Aut}G\) coincides with the inner automorphism group \(\operatorname{Inn}G\). In \cite{AyuSuzuki2026masterthesis}, it was verified that for no \(\psi\in \operatorname{Aut}G\) is the twisted conjugation quandle \(\operatorname{Conj}(G,\psi)\) isomorphic, as a quandle, to \(\operatorname{Core}G\cong S^{3}_{\mathbb{R}}\); however, it remained unclear whether such an isomorphism exists for some group. More generally, we pose the following problem.
\begin{question}
\label{question_Core_twisted_conj}
Let \(G\) be a group. Does there exist a group $H$ and a group automorphism \(\psi\in \operatorname{Aut}H\) such that \(\operatorname{Core} G\) is isomorphic, as a quandle, to \(\operatorname{Conj}(H,\psi)\)?
\end{question}
In Question~\ref{question_Core_twisted_conj}, the groups \(G\) and \(H\) must have the same cardinality as sets, but they need not be isomorphic as groups. By Proposition~\ref{prop_abelian_coincidence_Core_ConjAkita}, there exist infinitely many groups satisfying Question~\ref{question_Core_twisted_conj}. We expect that finding and classifying such groups will provide insight into how the noncommutativity of a group influences the associated quandle structures.

\section*{Acknowledgements}
Both authors are grateful to Hajime Fujita, Hiroshi Tamaru, and Ryoya Kai. 

The second author would like to express his gratitude to Hiroyuki Ochiai for many valuable comments and discussions. He is grateful to Osamu Saeki for his helpful comments. He thanks Michiko Yonemura for drawing Fig. \ref{pic_def_quandle_operation}. He was supported by JST SPRING, Grant Number JPMJSP2136.

This work was partly supported by MEXT Promotion of Distinctive Joint Research Center Program JPMXP0723833165 and Osaka Metropolitan University Strategic Research Promotion Project (Development of International Research Hubs).
\bibliography{bibliography}
\bibliographystyle{plain}
\end{document}